\DeclareMathOperator*{\hocolim}{\operatorname{hocolim}}
\DeclareMathOperator*{\tel}{\operatorname{tel}}
\DeclareMathOperator*{\cone}{\operatorname{cone}}
\DeclareMathOperator*{\Cech}{\operatorname{\check{C}ech}}
\newcommand{\bR}{{\mathbb R}}
\newcommand{\rmodz}{{\mathbb{R}/\mathbb{Z}}}
\newcommand{\novr}{\Lambda_{\geq 0}}
\newcommand{\novf}{\Lambda}
\newtheorem{lemma}{Lemma}[section]
\newtheorem{theorem}[lemma]{Theorem}
\newtheorem{proposition}[lemma]{Proposition}
\newtheorem{remark}[lemma]{Remark}
\newtheorem{definition}{Definition}
\newtheorem{corollary}[lemma]{Corollary}
\newtheorem{conjecture}{Conjecture}
\begin{document}
\title[Descent with algebraic structures for symplectic cohomology]{Descent with algebraic structures for symplectic cohomology}
\author{Umut Varolgunes}
\begin{abstract}
We formulate and prove a chain-level descent property of symplectic cohomology for involutive covers by compact subsets that take into account the natural algebraic structures that are present. The notion of an involutive cover is reviewed. We indicate the role that the statement plays in mirror symmetry.
\end{abstract}
\maketitle
\tableofcontents
\section{Introduction}

Let $(M^{2n},\omega)$ be a geometrically bounded symplectic manifold \cite{groman} and let $\Bbbk$ be a field of characteristic $0$. If $c_1(T M)=0,$ we also fix a grading of $M$, that is, a homotopy class of non-vanishing sections of the complex line bundle $\Lambda_{\mathbb{C}}^n(TM)\to M$ for some compatible almost complex structure, to equip all the chain complexes below with $\mathbb{Z}$-gradings\footnote{We use chain complex to mean cochain complex throughout the paper. This means that in our conventions, the differential of a chain complex increases the degree by one.}; otherwise we only have $\mathbb{Z}_2$-gradings. The $\Bbbk$-algebra $$\Lambda_{\geq 0}:=\{\sum_{i\geq 0} a_iT^{\alpha_i}\mid a_i\in\Bbbk, \alpha_i\in\mathbb{R}_{\geq 0}, \text{ where } \alpha_i\to\infty \text{, as } i\to\infty\}$$ is called the Novikov ring and its quotient field $\Lambda$ is called the Novikov field. 


The relative symplectic cohomology $SH_M^*(K)$ of a compact subset $K\subset M$ was defined in \cite{varolgunesmayer} using the geometric ideas of \cite{floer1, floer2} (see \cite{groman, venkatesh, mclean, cieliebakoancea} for similar constructions). Due to the reasoning explained in Remark \ref{rem-ter}, in this note we will try out a new terminology where we replace ``relative symplectic cohomology" with ``symplectic cohomology with supports" and ``relative symplectic cohomology of $K$ inside $M$" with ``symplectic cohomology of $M$ with support on $K$".

 $SH_M^*(K)$ is the homology of a canonically defined chain complex $SC^*_M(K)$ over $\novr$ \cite[Section 1.5]{abouzaid2022framed}, which we now call symplectic cochains of $M$ with support on $K$. $SC^*_M(K)$ is obtained as the completed homotopy colimit of the universal homotopy coherent diagram of Floer complexes $CF^*(- ;\Lambda_{\geq 0})$ of non-degenerate Hamiltonians that are negative on $K$ with monotone continuation maps \footnote{Using $SC^*_M(K)$ for this canonical model is a change in notation as well. Previously, $SC^*_M(K)$ was used to denote the telescope model from \cite{varolgunesmayer}, which was not ideal notation since it hid the choice of acceleration data involved.}. It can intuitively be thought of as the Floer complex of the upper semi-continuous function that is $0$ on $K$ and $+\infty$ outside.

There are canonical restriction chain maps $
SC^*_M(K')\to SC^*_M(K),
$ for $K\subset K'$, which upgrades the structure to a presheaf of chain complexes over the compact subsets of $M$ (c.f. \cite[Remark 1.6]{abouzaid2022framed}). We also have a canonical PSS chain map \cite{PSS}:
$C^*(M;\mathbb{Z})\otimes\Lambda_{> 0}\to SC^*_M(K).$ Crucially, this map is a quasi-isomorphism if $M$ is closed and $M=K$ \cite[Section 3.3.3]{varolgunes2018mayer}.

We cannot hope to have a local-to-global property for symplectic cohomology with supports in general because of the PSS isomorphism that we just mentioned and the fact that $SH_M^*(K)\otimes_{\novr}\novf$ is known to vanish for displaceable sets \cite{mak, varolgunes2018mayer}, e.g. for sufficiently small Darboux balls. It turns out that for a special class of covers we have a satisfactory positive result.

\begin{definition}\label{def-pois} We say that the compact subsets $K_1,\ldots, K_N\subset M$ are Poisson commuting if there exists a smooth map $F: M\to\mathbb{R}^k$ with Poisson commuting components and compact $P_1,\ldots, P_N\subset \mathbb{R}^k$ such that  $K_m=F^{-1}(P_m)$ for all $m=1,\ldots ,N$. If $K=K_1\cup\ldots\cup K_N$ and $K_1,\ldots, K_N$ are Poisson commuting compact subsets, we call $K_1,\ldots, K_N$ an involutive cover of $K$.
\end{definition}

The \v{C}ech chain complex  of $SC^*_M(-)$ for a cover $K=K_1\cup\ldots \cup K_N$ is defined as $$\Cech(SC_M; K_1,\ldots,K_N):=\bigoplus_{p\geq 0}\bigoplus_{|J|=p+1}SC^*_M(\bigcap_{m\in J}K_m)[p],$$where $[-]$ denotes a degree shift and $J$ runs through subsets of $\{1,\ldots ,N\}$, and with differential the sum of the \v{C}ech differential and the Floer differential - ignoring the signs for the moment.

\begin{theorem}[{\cite[Theorem 4.8.5]{varolgunesmayer}}]Assume that $K=K_1\cup\ldots\cup K_N$ is an involutive cover. Then, the canonical chain map \begin{equation} SC^*_M(K)\to \Cech(SC_M; K_1,\ldots,K_N)\end{equation} is a quasi-isomorphism. \end{theorem}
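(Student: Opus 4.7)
My plan is to prove the theorem by reducing it to a chain-level \v{C}ech-type statement for the Floer complexes of individual Hamiltonians in a carefully chosen acceleration datum, then passing to the completed homotopy colimit over the acceleration.

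First, I would use the involutive structure to construct compatible acceleration data for $K$ and for every intersection $K_J := \bigcap_{m\in J}K_m$. Let $F: M \to \mathbb{R}^k$ and $P_1,\ldots,P_N \subset \mathbb{R}^k$ be as in Definition \ref{def-pois}, and set $P_J := \bigcap_{m\in J}P_m$, so that $K_J = F^{-1}(P_J)$. For each nonempty $J \subset \{1,\ldots,N\}$ I would choose a sequence of smooth functions $h_n^J: \mathbb{R}^k \to \mathbb{R}$ that is $\leq 0$ near $P_J$, grows to $+\infty$ off any neighborhood of $P_J$, and is chosen coherently in $J$ so that $h_n^J \geq h_n^{J'}$ whenever $J \subset J'$. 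Setting $H_n^J := h_n^J \circ F$ then produces cofinal acceleration data for $K_J$, and the comparisons between the $h_n^J$ induce continuation maps $CF^*(H_n^{J'};\novr) \to CF^*(H_n^J;\novr)$ refining the presheaf restriction maps. The crucial feature of these Hamiltonians is that each $H_n^J$ Poisson commutes with every component of $F$: $\{H_n^J,F_j\}=0$.

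The heart of the argument is the chain-level claim at fixed $n$: the canonical map
\begin{equation*}
CF^*(H_n^{\{1,\ldots,N\}};\novr) \;\longrightarrow\; \bigoplus_{\emptyset \ne J \subset \{1,\ldots,N\}} CF^*(H_n^J;\novr)[|J|-1],
\end{equation*}
with differential combining Floer differentials and \v{C}ech/continuation boundaries, is a quasi-isomorphism. The analytic input is an integrated maximum principle exploiting Poisson commutativity: since $\{h\circ F,F_j\}=0$, for any Floer trajectory $u$ of $H_n^J$ the composition $F_j\circ u$ satisfies a subharmonicity-type inequality, so images of orbits and connecting cylinders are trapped in regions controlled by the level sets of $F$. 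This allows one to stratify the generators of $CF^*(H_n^{\{1,\ldots,N\}};\novr)$ by which subset of the $P_m$ they sit near, and the Floer differential respects the induced filtration; the associated graded is then precisely the \v{C}ech complex appearing on the right, from which the quasi-isomorphism follows by a standard acyclicity argument for the nerve of the cover.

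The final step is to pass from the fixed-$n$ statement to the full theorem by taking completed homotopy colimits: on the left this yields $SC^*_M(K)$, and on the right the coherence of the continuation maps (combined with the fact that each $\{H_n^J\}_n$ is cofinal for $K_J$ and that completion commutes with finite direct sums) identifies the result with $\Cech(SC_M;K_1,\ldots,K_N)$. The main obstacle I anticipate is the integrated maximum principle step: one needs energy estimates sharp enough to preclude Floer trajectories from crossing between regions of $M$ that are separated by the $F$-fibers, and the bookkeeping of how these estimates interact with the completed direct sum structure. This is where the involutive hypothesis is used essentially—without Poisson commutativity the displaceability obstruction mentioned before the theorem would immediately defeat any local-to-global statement.
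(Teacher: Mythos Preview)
Your proposal has a real gap at the ``heart of the argument,'' the fixed-$n$ claim. First a minor point: in your displayed map the left-hand side $CF^*(H_n^{\{1,\ldots,N\}};\novr)$ is, in your own notation, the Floer complex for the \emph{full intersection} $K_1\cap\cdots\cap K_N$, whereas the map you need starts from a Hamiltonian for the \emph{union} $K$. But the deeper problem is the argument itself. The \v{C}ech complex on the right is built from genuinely different Floer complexes $CF^*(H_n^J;\novr)$ for different Hamiltonians $H_n^J$, linked by continuation maps; it is not the associated graded of any filtration on a single Floer complex. Your sketch---stratify generators of the union's complex by which $P_m$ they are near, observe the differential respects this, and then ``the associated graded is the \v{C}ech complex''---conflates two different objects. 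Even with perfect confinement of trajectories, the associated graded of a filtration on $CF^*(H_n^K;\novr)$ would be a direct sum of sub/quotient complexes of that same complex, not a totalization of other Floer complexes joined by continuation maps. There is also no reason to expect the fixed-$n$ map to be a quasi-isomorphism before completion: the two-subset Mayer--Vietoris in \cite{varolgunesmayer} already requires passing to the completed telescope and using the acceleration, not a single Hamiltonian level.

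The paper's route avoids this entirely. It proves only the two-subset case geometrically, at the level of completed telescopes (Theorem \ref{thm-lin-two}, invoking \cite[Theorem 4.8.2]{varolgunesmayer} and Proposition \ref{prop-tel-compare}), and then reduces the general $N$ to $N=2$ by a purely algebraic induction (Lemma \ref{lem-inc-exc}): descent for $K_1,\ldots,K_N$ follows from descent for the pair $K_1, K_2\cup\cdots\cup K_N$ together with descent for the shorter lists $K_2,\ldots,K_N$ and $K_1\cap K_2,\ldots,K_1\cap K_N$. The only geometric input needed at each inductive step is that the relevant subsets remain (weakly) Poisson commuting, which is Corollary \ref{cor-weak-Pois}. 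So the involutive hypothesis is used solely to keep feeding the two-subset theorem, not to run a single global maximum-principle argument for all $N$ at once.
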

 In fact, below we give a very mild generalization of this result to weakly involutive covers (Definition \ref{def-weak-inv}) in Theorem \ref{thm-linear}, which in particular gives us a chance to review the proof.

In \cite{abouzaid2022framed}, with Yoel Groman and Mohammed Abouzaid, we construct a bigger model $SC^*_{M,big}(K)$ of $SC^*_{M}(K)$, which has a natural action of the Kimura-Stasheff-Voronov (KSV) operad (see Section \ref{sec2.2} for our specific usage here and references) and restriction maps that are compatible with this action upgrading the structure to a presheaf of KSV operad algebras (again, see \cite[Remark 1.6]{abouzaid2022framed}). Here, a bigger model precisely means that we have canonical inclusions of chain complexes $$SC^*_{M}(K) \to   SC^*_{M,big}(K),$$ compatible with restrictions maps, that induce isomorphisms on homology.

The goal of this note is to incorporate this algebraic structure into the local-to-global principle for weakly involutive covers. We achieve this task in Theorem \ref{thm} using existing techniques from the literature. We then indicate how this result would be used in mirror symmetry in the final section. 

\subsection*{Acknowledgements} The author thanks Bertrand Toën for a useful conversation about Section 2.3, Mohammed Abouzaid for explaining the role of gerbes in Section 5 and the referee(s) for useful feedback. U.V. was partially supported by the TÜBİTAK 2236 (CoCirc2) programme with a grant numbered 121C034 and by the Science Academy’s Young Scientist Awards Program (BAGEP) in Turkey.

\section{Background}
\subsection{Weakly involutive covers}
Let $M$ be a symplectic manifold, which has an induced Poisson bracket $\{-,-\}: C^\infty(M)\times C^\infty(M)\to C^\infty(M).$ We note the following well-known result, e.g. see \cite[Lemma 4.8.4]{varolgunesmayer}.
\begin{lemma}\label{lem-poisson-basic}Let $f_1,\ldots ,f_N: M \to \bR$, and $g_1, g_2 : \bR^N
\to \bR$ be smooth functions.
Assume that $\{f_i
, f_j\} = 0$, for all $i,j$. Then the functions $G_l
: M \to \bR$,
$l = 1, 2,$ defined by $G_l(x)=g_l(f_1(x),\ldots ,f_N(x))$ also satisfy $\{G_1
, G_2\} = 0$.\qed
\end{lemma}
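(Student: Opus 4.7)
The plan is to reduce the statement to the basic fact that the Poisson bracket is a biderivation, equivalently that on a symplectic manifold it is computed pointwise from the differentials of its arguments via the Poisson bivector $\pi$: $\{f,g\}(x) = \pi_x(df_x, dg_x)$. Once one has this, the result follows from the chain rule applied to $G_l = g_l \circ F$ where $F = (f_1,\ldots,f_N): M \to \bR^N$.

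Concretely, I would first apply the chain rule to write, at each point $x \in M$,
$$dG_l|_x = \sum_{i=1}^N \frac{\partial g_l}{\partial x_i}(F(x)) \, df_i|_x.$$
Then using bilinearity of $\pi_x$ in its two arguments (equivalently, the biderivation property of $\{-,-\}$), one obtains
$$\{G_1, G_2\}(x) = \sum_{i,j=1}^N \frac{\partial g_1}{\partial x_i}(F(x)) \, \frac{\partial g_2}{\partial x_j}(F(x)) \, \{f_i, f_j\}(x).$$
By hypothesis every $\{f_i, f_j\}$ is identically zero, so every term in the sum vanishes, giving $\{G_1, G_2\} \equiv 0$.

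There is essentially no obstacle here: the entire content of the lemma is the chain rule together with the $C^\infty$-biderivation property of $\{-,-\}$. The only minor subtlety worth noting is that $g_1, g_2$ are smooth rather than polynomial, so one cannot prove the key Leibniz-type identity $\{g \circ F, h\} = \sum_i (\partial_i g)(F) \cdot \{f_i, h\}$ by induction on the degree of $g$; instead one invokes the pointwise formula through $\pi$, which makes this identity immediate from the usual chain rule for differentials. Accordingly, the statement can be quoted from any standard Poisson geometry reference and the proof is a one-line calculation.
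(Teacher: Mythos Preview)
Your argument is correct and is the standard one-line calculation via the chain rule and the biderivation property of the Poisson bracket. The paper does not give its own proof of this lemma: it simply records it as a well-known fact (with a reference to \cite[Lemma 4.8.4]{varolgunesmayer}) and marks it with a \qed, so there is nothing further to compare.
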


\begin{definition}\label{def-weak-inv} We call compact subsets $K_1,...,K_N\subset M$ weakly Poisson commuting if there exists smooth functions $f_{m,i}:M\to \bR$ for $m=1,\ldots, N$ and $i=1,2,\ldots$ such that  \begin{itemize}
\item $f_{m,i}\mid_{K_m}<0$ for all $m$ and $i$.
\item $f_{m,i}< f_{m,i+1}$ for all $m$ and $i$.
\item For all $m$, if $f_{m,i}(x)<0$ for all $i$, then $x\in K_m$.
\item The Poisson bracket $\{f_{m,i},f_{m',i}\}=0$ for all $i$ and $m,m'$.
\end{itemize}
If $K=K_1\cup\ldots\cup K_N$ and $K_1,\ldots, K_N$ are weakly Poisson commuting compact subsets, we call $K_1,\ldots, K_N$ a weakly involutive cover of $K$.
\end{definition}
The order of the compact subsets is, of course, unimportant. We used this notation for ease of reading. The weakly Poisson commutation property is inherited by sublists.
\begin{proposition}
If $K_1,...,K_N$ are Poisson commuting as in Definition \ref{def-pois}, then they are weakly Poisson commuting.
\end{proposition}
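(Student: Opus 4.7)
The plan is to produce each $f_{m,i}$ as the pullback under $F$ of a smooth function on $\mathbb{R}^k$. Since the components of $F$ are pairwise Poisson commuting by hypothesis, Lemma \ref{lem-poisson-basic} then automatically yields $\{g_1\circ F, g_2\circ F\} = 0$ for any $g_1, g_2 \in C^\infty(\mathbb{R}^k)$, so the fourth bullet of Definition \ref{def-weak-inv} comes for free. The whole problem therefore reduces to the purely topological question of producing smooth functions $g_{m,i}: \mathbb{R}^k \to \mathbb{R}$ downstairs that cut out the compact sets $P_m$ in the required manner.

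For this I would invoke the classical fact that for every closed subset $P \subset \mathbb{R}^k$ there exists a smooth non-negative function $\rho_P: \mathbb{R}^k \to \mathbb{R}_{\geq 0}$ with $\rho_P^{-1}(0) = P$; this is a standard partition-of-unity construction on the open complement $\mathbb{R}^k \setminus P$ by countably many balls whose closures miss $P$, with the coefficients of the bump functions shrunk rapidly enough in all derivatives to guarantee $C^\infty$-convergence of the sum. Applying this to each $P_m$, I then simply set
\[
f_{m,i} := \rho_{P_m} \circ F - \tfrac{1}{i}.
\]

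The remaining three bullets of Definition \ref{def-weak-inv} are then immediate. On $K_m = F^{-1}(P_m)$ one has $\rho_{P_m}\circ F \equiv 0$, hence $f_{m,i} \equiv -1/i < 0$. Globally $f_{m,i+1} - f_{m,i} = \tfrac{1}{i(i+1)} > 0$ is a positive constant. And if $f_{m,i}(x) < 0$ for every $i$, then $\rho_{P_m}(F(x)) < 1/i$ for every $i$, which forces $\rho_{P_m}(F(x)) = 0$ and therefore $F(x) \in P_m$, i.e.\ $x \in K_m$. There is no real obstacle in this argument: the only non-tautological input is the existence of smooth defining functions for closed subsets of $\mathbb{R}^k$, a classical fact that I would simply cite rather than prove in the body of the note.
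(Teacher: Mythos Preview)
Your proof is correct and follows exactly the approach sketched in the paper: pull back a non-negative smooth function vanishing precisely on $P_m$ via $F$ and shift by $-1/i$, then invoke Lemma \ref{lem-poisson-basic} for the Poisson bracket condition. The paper's proof is a one-sentence citation of these same two ingredients; you have simply written out the verification of the four bullets explicitly.
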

\begin{proof}
   This immediately follows from the fact that for compact $Z \subset\mathbb{R}^k,$ we can always find a non-negative smooth function on $\mathbb{R}^k$ which vanishes precisely on $Z$ along with Lemma \ref{lem-poisson-basic}.
\end{proof}

\begin{remark}\label{rem-deg}
    We do not know whether weakly Poisson commuting is a strictly weaker condition than Poisson commuting, but we expect it to be more convenient in the ``degeneration setup". An example of this setup is a semi-stable polarized degeneration of a smooth complex manifold over a disk \cite{friedman, tehrani}. Roughly speaking, we use symplectic parallel transport to move the decomposition of the central fiber into its irreducible components to the general fiber in order to obtain an involutive cover using a version of \cite[Theorem 2.12]{kaveh} (also see the more recent \cite{de2023fibrations}). 
\end{remark}

\begin{proposition}\label{prop-cap-cup}
Let $K$ be a finite union of finite intersections of the compact subsets $K_1,\ldots,K_N$ for which there exists smooth functions $f_{m,i}:M\to \bR$ with $m=1,\ldots, N$ and $i=1,2,\ldots$ such that  \begin{itemize}
\item $f_{m,i}\mid_{K_m}<0$ for all $m$ and $i$.
\item $f_{m,i}< f_{m,i+1}$ for all $m$ and $i$.
\item For all $m$, if $f_{m,i}(x)<0$ for all $i$, then $x\in K_m$.
\end{itemize} Then, we can construct smooth functions $g_i:M\to \bR$, $i=1,2,\ldots$ such that \begin{itemize}
\item $g_{i}\mid_{K}<0$ for all $i$.
\item $g_{i}< g_{i+1}$ for all $i$.
\item If $g_{i}(x)<0$ for all $i$, then $x\in K$.
\item There are smooth $h_i:\mathbb{R}^N\to \mathbb{R}$ such that $$g_{i}(x)=h_i(f_{1,i}(x),\ldots ,f_{N,i}(x))$$ for all $i$ and $x$.
\end{itemize}
\end{proposition}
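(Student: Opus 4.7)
The plan is to realize $g_i$ as a shifted smooth approximation of the non-smooth function $\Phi\colon\mathbb R^N\to\mathbb R$, $\Phi(y):=\min_j\max_k y_{m_{j,k}}$, precomposed with $F_i:=(f_{1,i},\ldots,f_{N,i})\colon M\to\mathbb R^N$, where we have written $K=\bigcup_j\bigcap_k K_{m_{j,k}}$. First note that $\Phi\circ F_i<0$ on $K$ for every $i$, and conversely if $\Phi(F_i(x))<0$ for every $i$, then pigeonholing the finitely many indices $j$ that realize the outer minimum, together with the nesting $\{f_{m,i+1}<0\}\subseteq\{f_{m,i}<0\}$, produces a $j^{\ast}$ with $x\in\bigcap_k K_{m_{j^{\ast},k}}\subseteq K$. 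So a smooth approximation of $\Phi$ does the job, provided the smoothing error is controlled.

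I will build smooth monotone approximations $M_\delta,m_\delta\colon\mathbb R^n\to\mathbb R$ of $\max$ and $\min$ by convolving with a smooth mean-zero mollifier $\phi_\delta$ of radius $\delta$: set $M_\delta:=\max\ast\phi_\delta$ and $m_\delta(y):=-M_\delta(-y)+C\delta$ for a suitable constant $C$. Convexity of $\max$ and Jensen give $\max\le M_\delta$, the $1$-Lipschitz property of $\max$ gives $M_\delta\le\max+C\delta$, and strict positivity of $\phi_\delta$ makes all partials of $M_\delta$ strictly positive (analogously for $m_\delta$). Substituting these into the nested formula for $\Phi$ produces a smooth $\tilde\Phi_\delta$ satisfying $\Phi\le\tilde\Phi_\delta\le\Phi+C'\delta$, strictly monotone in each $y_m$ that actually appears.

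Now set $g_i(x):=\tilde\Phi_{\delta_i}(F_i(x))+\eta_i$; this gives the required factorization with $h_i:=\tilde\Phi_{\delta_i}+\eta_i$ smooth. Let $a_i:=-\sup_{x\in K}\Phi(F_i(x))$, which is positive by compactness of $K$. Choose $\delta_i>0$ summable with $C'\delta_i<a_i$, and set $\eta_i:=-\sum_{j\ge i}2C'\delta_j$, so that $\eta_i\nearrow 0$ while $\eta_{i+1}-\eta_i=2C'\delta_i$. Then: (a) on $K$, $g_i\le -a_i+C'\delta_i+\eta_i<0$; (b) $g_{i+1}-g_i\ge(\Phi(F_{i+1})-\Phi(F_i))-C'\delta_i+2C'\delta_i\ge C'\delta_i>0$; (c) if $g_i(x)<0$ for all $i$ then $\Phi(F_i(x))<|\eta_i|\to 0$, and strict monotonicity of $\Phi$ along the componentwise strictly increasing sequence $F_i(x)$ forces $\Phi(F_{i_0}(x))<0$ for every $i_0$, whence the pigeonhole argument of the first paragraph delivers $x\in K$.

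The main obstacle is (b): the pointwise smoothing error $C'\delta_i$ cannot be absorbed into the pointwise increment $\Phi(F_{i+1})-\Phi(F_i)$, which is positive but may shrink arbitrarily close to $0$ at points far from $K$. The resolution is to let $\eta_i$ climb upward toward $0$ by more than $C'\delta_i$ at each step, which is compatible with $\eta_i\to 0$ as long as $\sum\delta_i<\infty$; taking $\eta_i$ negative rather than positive is precisely what makes (a) compatible with this increasing-$\eta_i$ requirement.
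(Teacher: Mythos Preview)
Your proof is correct and complete, but it takes a genuinely different route from the paper. The paper argues by induction, reducing to the case $N=2$ with $K=K_1\cap K_2$ or $K=K_1\cup K_2$; for the intersection it builds $h_i$ geometrically by smoothing the union of the two negative coordinate half-axes in $\mathbb{R}^2$ to a convex curve $C_i$ and taking a signed distance to $C_i$ along slope-$1$ lines. You instead treat the general $K=\bigcup_j\bigcap_k K_{m_{j,k}}$ in one shot, mollifying the piecewise-linear function $\Phi=\min_j\max_k$ and adding carefully chosen shifts $\eta_i$. Your approach avoids the induction entirely and makes the role of the min/max combinatorics transparent; the paper's approach is more hands-on and makes the $N=2$ picture very explicit but leaves the general step to the reader.

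Two small remarks. First, your assertion that ``strict positivity of $\phi_\delta$ makes all partials of $M_\delta$ strictly positive'' is not literally true: $\partial_iM_\delta(y)$ vanishes whenever $y_i$ is smaller than some other $y_j$ by more than $2\delta$. Fortunately you never use this; all you actually need is that $M_\delta$ and $m_\delta$ are coordinatewise \emph{non-decreasing} (to push the sandwich $\Phi\le\tilde\Phi_\delta\le\Phi+C'\delta$ through the composition), and in step~(c) you correctly invoke strict monotonicity of $\Phi$ itself along componentwise strictly increasing sequences, which holds because for $y<y'$ the $j$ realizing $\min_j\max_k y'_{m_{j,k}}$ gives $\Phi(y)\le\max_k y_{m_{j,k}}<\max_k y'_{m_{j,k}}=\Phi(y')$. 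Second, it is worth making explicit that $m_\delta(y+c\mathbf{1})=m_\delta(y)+c$, inherited from the analogous translation-equivariance of $\max$, since this is what lets you pass the additive error $C\delta$ through the outer $m_\delta$ when deriving the upper bound $\tilde\Phi_\delta\le\Phi+C'\delta$.
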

\begin{proof}
    By a simple induction, it suffices to prove the case $N=2,$ where $K$ is either $K_1\cap K_2$ or $K_1\cup K_2.$ 

    We deal with the intersection case. Consider the maps $F_i:= (f_{1,i}, f_{2,i}): M\to\mathbb{R}^2.$ By our assumption, $F_i$'s map $K_1$ to the open left half space and $K_2$ to the open lower half space. Let $C_i$ be a curve that is a smoothing of the union of the non-positive parts of the two axes in $\mathbb{R}^2$, which is contained in the negative quadrant. We assume that the slopes of all of its tangent lines are non-positive and no negative slope is attained twice. The complement of $C_i$ in the plane has two components. We call the one that is contained in the negative quadrant $B_i$. We can make sure that $F_i(K_1\cap K_2)\subset B_i$ and $B_{i+1}\supset B_i$ for all $i$. For example, we can take $$C_i=\{xy=\delta_i\mid x,y<0\},$$with $\delta_i$ a sequence of positive numbers monotonically converging to $0$ sufficiently fast.
    
    We define a smooth function $h_i:\mathbb{R}^2\to\mathbb{R}$ which is $0$ on $C_i$, negative on $B_i$ and positive elsewhere as follows. Let $x\in \mathbb{R}^2$, draw the straight line with slope $1$ from $x$ and define $h_i(x)$ to be the signed length (negative on $B_i$ and positive on the other component) of the segment between $x$ and where it intersects $C_i.$ It is now elementary to check that $g_i:=h_i\circ F_i$ satisfies the conditions.

    The union case is similar and we omit it.
\end{proof}

\begin{corollary}\label{cor-weak-Pois}
If $K_1,...,K_N$ are weakly Poisson commuting, then the list of all compact subsets that can be obtained as a finite union of finite intersections of $K_1,\ldots,K_N$ is also weakly Poisson commuting.
\end{corollary}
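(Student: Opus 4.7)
The plan is to reduce the corollary directly to Proposition \ref{prop-cap-cup} together with Lemma \ref{lem-poisson-basic}. Let $f_{m,i}$ be witness functions for the weakly Poisson commuting property of $K_1,\ldots,K_N$, so in particular $\{f_{m,i},f_{m',i}\}=0$ for all $m,m'$ and $i$. Let $\mathcal{L}$ denote the list of all compact subsets of $M$ obtainable as a finite union of finite intersections of the $K_m$'s. I need to produce, for each $L\in\mathcal{L}$, a sequence of smooth functions $g_{L,i}:M\to\mathbb{R}$ satisfying the three bullet points of Definition \ref{def-weak-inv} for $L$, and moreover satisfying the pairwise Poisson commutation $\{g_{L,i},g_{L',i}\}=0$ for any $L,L'\in\mathcal{L}$ at each fixed level $i$.

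First I apply Proposition \ref{prop-cap-cup} to each $L\in\mathcal{L}$ separately. This is legitimate because the hypotheses of that proposition are exactly the first three bullets of Definition \ref{def-weak-inv}, which hold for $K_1,\ldots,K_N$. The output of the proposition is, for each $L$, a sequence $g_{L,i}$ and smooth maps $h_{L,i}:\mathbb{R}^N\to\mathbb{R}$ such that
\[
g_{L,i}(x)=h_{L,i}(f_{1,i}(x),\ldots,f_{N,i}(x))
\]
for all $i$ and $x$, and the three cut-out conditions for $L$ are already guaranteed.

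It remains to verify the Poisson commutation condition at each fixed level $i$. For any $L,L'\in\mathcal{L}$, the functions $g_{L,i}$ and $g_{L',i}$ are both pulled back from smooth functions on $\mathbb{R}^N$ by the common map $(f_{1,i},\ldots,f_{N,i}):M\to\mathbb{R}^N$, whose components pairwise Poisson commute by hypothesis. Lemma \ref{lem-poisson-basic} then immediately yields $\{g_{L,i},g_{L',i}\}=0$, as required.

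I expect no real obstacle here; all the work has already been done in Proposition \ref{prop-cap-cup}, whose whole point was to keep the new cutoff functions inside the ring generated by the $f_{m,i}$ at the same level $i$, precisely so that Lemma \ref{lem-poisson-basic} can close the argument. The only mild care needed is to notice that the Proposition has to be applied level-by-level with the same tuple $(f_{1,i},\ldots,f_{N,i})$, so that the resulting $g_{L,i}$ and $g_{L',i}$ are functions of a \emph{common} Poisson-commuting tuple; this is already guaranteed by the statement of Proposition \ref{prop-cap-cup} as written.
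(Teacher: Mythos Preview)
Your proposal is correct and follows exactly the paper's approach: the paper's proof is the one-liner ``This is immediate from Proposition \ref{prop-cap-cup} and Lemma \ref{lem-poisson-basic},'' and you have simply unpacked this, applying Proposition \ref{prop-cap-cup} to each $L$ to express $g_{L,i}$ as a function of the common Poisson-commuting tuple $(f_{1,i},\ldots,f_{N,i})$, then invoking Lemma \ref{lem-poisson-basic} for the commutation. Nothing further is needed.
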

\begin{proof}
    This is immediate from Proposition \ref{prop-cap-cup} and Lemma \ref{lem-poisson-basic}.
\end{proof}

Note that this corollary is true when we remove the words ``weakly" as well, and the proof is easier.
\subsection{The KSV operad}\label{sec2.2}
We will use the notions of operads and algebras over operads freely. For definitions, see \cite[Section I.1-2]{krizmay}. Unless otherwise specified, our operations are symmetric and over $Ch(\novr)$. We also do not consider $0$-ary operations (e.g. units of algebras) throughout the paper.

The operad that is relevant here is the KSV operad \cite{kimura, abouzaid2022framed} $$\{\text{KSV}(n)=C_{-*}(f\overline{\mathcal{M}}_{0,n+1}^\mathbb{R};\novr)\}_{n\geq 1}.$$ Here $f\overline{\mathcal{M}}_{0,2}^\mathbb{R}:=S^1$ and, for $n\geq 2$, an element of $f\overline{\mathcal{M}}_{0,n+1}^\mathbb{R}$ is a stable nodal curve from $\overline{\mathcal{M}}_{0,n+1}$ equipped with tangent rays at the branches of each node up to simultaneous rotation (in opposite directions) as well as at the marked points. Note that $f\overline{\mathcal{M}}_{0,n+1}^\mathbb{R}$ can be obtained by doing a real blow-up along the boundary divisor (a normal crossing divisor) of the Deligne-Mumford space $\overline{\mathcal{M}}_{0,n+1}.$ Finally, $C_{-*}$ denotes a version of cubical singular chains, symmetric normalized cubical singular chains to be specific, with negated grading. The details are unimportant for the purposes of this paper.

It is well-known that algebras over the homology operad of KSV are precisely BV algebras \cite{getzler}. The definition of the latter is reviewed in Section \ref{sec3.3}. In other words, the homology operad of KSV is isomorphic to the BV operad. Let us note that the KSV operad is formal \cite{salvatore, severa}, see also \cite[Proposition 1.11]{abouzaid2022framed}. This formality is only true because we assumed that our ground field $\Bbbk$ is characteristic $0.$

In \cite{vallette}, a cofibrant replacement for the BV operad (and by its formality, for the KSV operad) was constructed. Following the same paper, we call algebras over this operad homotopy BV algebras. For these, there is a more flexible notion of a morphism that we call $BV_\infty$-maps \cite[Section 4]{vallette}. We have a functor from $KSV$-algebras with strict morphisms to homotopy BV algebras with $BV_\infty$-maps.

\subsection{Homotopy limits of cosimplicial diagrams}
Let $\mathbb{N}^{inj}_{aug}$ be the category with objects $-1,0,1,2,\ldots $ and morphisms from $p$ to $q$ the set of injective order-preserving maps $\{n\in \mathbb{Z}\mid 1\leq n\leq p+1\} \to \{n\in \mathbb{Z}\mid 1\leq n\leq q+1\}.$ $\mathbb{N}^{inj}$ denotes the full subcategory with non-negative integers as objects.

A contravariant/covariant functor $\mathbb{N}^{inj}\to Ch(\novr)$ is called a semi-simplicial / semi-cosimplicial chain complex. An augmentation of a semi-cosimplicial chain complex $\mathbb{N}^{inj}\to Ch(\novr)$ is simply an extension of the functor to $\mathbb{N}^{inj}_{aug}\to Ch(\novr)$.

Let us denote by $\Delta^p$ the $p$ dimensional simplex inside $\mathbb{R}^{p+1}$ and the affine, vertex order preserving inclusions into codimension one faces by $\delta_{i}: \Delta^p\to \Delta^{p+1},$ with $i=0,\ldots ,p+1.$

We start with an alternative interpretation of $\Cech(SC_M; K_1,\ldots,K_N)$. Let $(\mathcal{D}^\bullet,d_{i})$ be a semi-cosimplicial chain complex. In our case, this will be given by $$\mathcal{SC}^\bullet:=p\mapsto \bigoplus_{|J|=p+1}SC^*_M(\bigcap_{m\in J}K_m),$$ with face maps $d_i$ obtained from the restriction maps in the standard way. An augmentation of $\mathcal{SC}^\bullet$ is given by $$-1\mapsto SC^*_M(K_1\cup \ldots\cup K_N).$$

Let us define the  semi-simplicial chain complex
$$
p\mapsto NC^*(\Delta^p):= \bigoplus_{F\subset \Delta^p}\Lambda_{\geq 0}\cdot \delta_F$$ where the direct sum is over all faces of $\Delta^p$ (which are in one to one correspondence with all the morphisms in $\mathbb{N}^{inj}$ with target $n$) and $\delta_F$ is in degree equal to the dimension of $F.$ We think of the elements of $NC^*(\Delta^p)$ as assignments of scalars to each face of $\Delta^p$. We equip $NC^*(\Delta^p)$ with the differential that sends $\delta_F$ to the alternating sum of $\delta_{F'}$, where $F'$ is a face containing $F$ of dimension $1$ higher. The coface maps of $NC^*(\Delta^\bullet)$ are those induced by the face inclusions $\Delta^p\to \Delta^{q}.$

We can define the totalization of $\mathcal{D}^\bullet$ as the chain complex $$Tot(\mathcal{D}^\bullet):=\text{eq} \left(\prod_{p=0}^\infty D^p\otimes NC^*(\Delta^p)\rightrightarrows \prod_{f: r\to q} D^q\otimes NC^*(\Delta^r)\right).$$ Here the second product is over all maps in $\mathbb{N}^{inj}$ and equalizer means that we are taking the kernel of the difference of the two natural maps: \begin{enumerate}
    \item whose $f: r\to q$ component is the projection $\prod_{p=0}^\infty D^p\otimes NC^*(\Delta^p)\to D^q\otimes NC^*(\Delta^q)$ composed with the coface map $f^*: NC^*(\Delta^q)\to NC^*(\Delta^r),$ and
    \item whose $f: r\to q$ component is the projection $\prod_{p=0}^\infty D^p\otimes NC^*(\Delta^p)\to D^r\otimes NC^*(\Delta^r)$ composed with the face map $f_*: D^q\to D^r.$
\end{enumerate} We find it helpful to think of the elements of $D^q\otimes NC^*(\Delta^r)$ as normalized simplicial cochains on $\Delta^r$ with values in $D^q.$

An augmentation of $\mathcal{D}^\bullet$ gives rise to a canonical map $$D^{-1}\to Tot(\mathcal{D}^\bullet)$$ by sending $d\in D^{-1}$ to the element of $\prod_{p=0}^\infty D^p\otimes NC^*(\Delta^p)$ with $p$ component equal to the degree $0$ normalized simplicial cochain that sends every vertex of $\Delta^p$ to $f_*(d)$ for the unique arrow $f: 0\to p$ in $\mathbb{N}^{inj}_{aug}.$

It is important to notice that the inclusion of  $Tot(\mathcal{D}^\bullet)$ into\begin{align*}
    \{(x_p=(f\mapsto x_p(f))) \in &\prod_{p=0}^\infty{D}^p\otimes NC^*(\Delta^p)\mid d_i(x_{p}(f))=x_{p+1}(\delta_if) \\& \text{for all } p\geq 0, i=0,\ldots ,p+1\text{ and face }f\text{ of } \Delta^p\} 
\end{align*} is surjective.
The following is easy to see from this description. \begin{proposition}\label{prop-Tot-Cech}
There is an isomorphism of chain complexes
$$Tot(\mathcal{SC}^\bullet)\to \Cech(SC_M; K_1,\ldots,K_N)$$
by $(x_p)\mapsto (x_p(F)),$ where $F$ denotes the unique codimension $0$ faces. Moreover, the isomorphism intertwines the canonical maps received from  $SC_M^*(K)$, where $K=K_1\cup \ldots\cup K_N.$\qed
\end{proposition}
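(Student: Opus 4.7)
The plan is to write down the candidate isomorphism $\Phi: Tot(\mathcal{SC}^\bullet) \to \Cech(SC_M; K_1, \ldots, K_N)$, $(x_p) \mapsto (x_p(F_p))$ with $F_p = \Delta^p$, and check four things in turn: well-definedness of the target, bijectivity, intertwining of differentials, and compatibility with the canonical maps from $SC^*_M(K)$. Each step is essentially bookkeeping given the setup assembled above.

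For well-definedness, I would simply observe that $D^p = \bigoplus_{|J|=p+1} SC^*_M(\bigcap_{m \in J} K_m) = 0$ for $p \geq N$, so the image lands in the finite direct sum $\bigoplus_p D^p[p]$ rather than the infinite product; the grading convention $\deg \delta_{F_p} = p$ absorbs the $[p]$-shift on the \v{C}ech side.

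For bijectivity I would invoke the reformulation of $Tot(\mathcal{SC}^\bullet)$ as the collections $(x_p(f))$ satisfying $d_i(x_p(f)) = x_{p+1}(\delta_i f)$ noted just above the proposition. Factoring an arbitrary face $f : r \to p$ as $f = \delta_{i_1} \circ \cdots \circ \delta_{i_{p-r}}$ and iterating compatibility yields $x_p(f) = d_{i_1} \cdots d_{i_{p-r}}(x_r(F_r))$, and the cosimplicial identities guarantee that this value is independent of the factorization chosen. Hence the top-face data $(x_r(F_r))_r$ freely parametrize $Tot$, which gives injectivity and also an explicit inverse to $\Phi$ once one verifies that the resulting collection satisfies the equalizer condition (again a direct consequence of the cosimplicial identities).

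To see $\Phi$ intertwines differentials, I would split the $Tot$ differential into its internal piece (acting via $d_{D^p}$) and its simplicial piece (acting via the differential of $NC^*(\Delta^p)$). The first contributes $d_{D^p}(x_p(F_p))$, matching the Floer piece of the \v{C}ech differential. For the second, since $F_p$ is top-dimensional, the only neighbors of $F_p$ contributing are its codimension-one faces $\partial_i F_p$ corresponding to $\delta_i : p-1 \to p$, so $(d_{NC^*} x_p)(F_p) = \sum_i (-1)^i x_p(\partial_i F_p)$. Applying the equalizer identity with $f = \mathrm{id}_{p-1}$ rewrites this as $\sum_i (-1)^i d_i(x_{p-1}(F_{p-1}))$, exactly the \v{C}ech coboundary of the $(p-1)$-th top-face datum.

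Finally, the canonical map $SC^*_M(K) \to Tot$ described before the proposition sends $d$ to an element concentrated on vertices, so $x_p(F_p) = 0$ for $p \geq 1$ and $x_0(F_0)$ is the tuple of restrictions $(d|_{K_m})_m \in D^0$; this is precisely the image of $d$ under the canonical $SC^*_M(K) \to \Cech$. The main obstacle, such as it is, is uniform sign-tracking between the simplicial coboundary of $NC^*$ and the \v{C}ech differential, which should come out consistently from the definitions.
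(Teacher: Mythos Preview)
Your proposal is correct and follows exactly the approach the paper intends: the proposition is stated with a \qed and preceded by the remark that it ``is easy to see from this description,'' where ``this description'' is precisely the reformulation of $Tot(\mathcal{D}^\bullet)$ via the compatibility conditions $d_i(x_p(f))=x_{p+1}(\delta_i f)$ that you exploit. You have simply written out the routine bookkeeping (bijectivity from the top-face data, the splitting of the differential, and the augmentation check) that the paper leaves to the reader.
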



In fact, $NC^*(\Delta^\bullet)$ is a semi-simplicial dga using the standard cup product. Therefore, if $\mathcal{D}^\bullet$ is a semi-cosimplicial algebra over a non-symmetric operad, $Tot(\mathcal{D}^\bullet)$ has an algebra structure over the same operad (from the same reasoning as in Proposition \ref{prop-TW-reason}). This is commonly used in the literature for defining product structures in Cech complexes of dga's, e.g. \cite[Equation (14.24)]{bott}. Unfortunately, the KSV operad is a symmetric operad. So we need a replacement for $NC^*(\Delta^\bullet)$ that has a semi-simplicial commutative dga structure (see Remark \ref{rem-sym}). Following Sullivan this is possible under the characteristic $0$ assumption (see \cite{sullivan, aznar, cheng}).

Let us define the commutative dga (cdga) of polynomial differential forms on the $p$-simplex as $$\Omega^*(\Delta^p):=\frac{\novr[t_0,\ldots ,t_p,dt_0,\ldots ,dt_p]}{(\sum t_i-1, \sum dt_i)}$$ with $\text{deg}(t_i)=0$ and $\text{deg}(dt_i)=1.$ Using pullback of forms one can easily define the semi-simplicial cdga: $$
p\mapsto \Omega^*(\Delta^p).$$

We then define the Thom-Whitney chain complex of $\mathcal{D}^\bullet$ by $$TW(\mathcal{D}^\bullet):=\text{eq} \left(\prod_{p=0}^\infty D^p\otimes\Omega^*(\Delta^p)\rightrightarrows \prod_{r\to q} D^q\otimes\Omega^*(\Delta^r)\right)$$ using the same notation with the definition of totalization. An augmentation gives rise to a canonical map $$D^{-1}\to TW(\mathcal{D}^\bullet).$$

\begin{proposition}\label{prop-TW-Tot}
By integrating forms, we can define a map of semi-cosimplicial spaces: $$\Omega^*(\Delta^\bullet)\to NC^*(\Delta^\bullet),$$ which induces a quasi-isomorphism
$$TW(\mathcal{D}^\bullet)\to Tot(\mathcal{D}^\bullet)$$
compatible with the maps obtained from an augmentation.
\end{proposition}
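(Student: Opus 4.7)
My plan is to proceed in four steps: construct the integration map, verify it is a levelwise quasi-isomorphism, promote this to a quasi-isomorphism of totalizations, and check compatibility with the augmentation. The main obstacle is the third step, since the equalizer-of-infinite-products construction does not in general preserve levelwise quasi-isomorphisms, so I will need a tower-of-truncations / Milnor sequence argument.

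First I would define $\int_p \colon \Omega^*(\Delta^p) \to NC^*(\Delta^p)$ by sending a polynomial form $\omega$ to the simplicial cochain whose value on a face $F$ is $\int_F \omega|_F$. Stokes' theorem makes this a chain map, and naturality of integration under the affine coface inclusions $\delta_i \colon \Delta^p \to \Delta^{p+1}$ provides commutation with the coface maps, so one obtains a morphism of semi-cosimplicial chain complexes. Tensoring with $\mathcal{D}^\bullet$ and passing to equalizers yields the claimed map $TW(\mathcal{D}^\bullet) \to Tot(\mathcal{D}^\bullet)$. At each fixed $p$ the map $\int_p$ is the classical Sullivan/de Rham quasi-isomorphism: both sides have cohomology $\novr$ concentrated in degree $0$, with the isomorphism realized by evaluating the constant cochain at any single vertex.

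For the third step I would filter both $TW(\mathcal{D}^\bullet)$ and $Tot(\mathcal{D}^\bullet)$ by partial totalizations $TW_{\leq n}$ and $Tot_{\leq n}$ obtained by restricting the products to indices $p \leq n$. Both towers consist of degreewise surjections. The kernel of $Tot_{\leq n} \to Tot_{\leq n-1}$ is canonically identified with $D^n \otimes NC^*_{\mathrm{rel}}(\Delta^n)$, the subcomplex of cochains with zero pullback to every proper face, which reduces to $\novr \cdot \delta_{\Delta^n}$ placed in degree $n$; similarly the $TW$-kernel is $D^n \otimes \Omega^*_{\mathrm{rel}}(\Delta^n)$, whose cohomology is $\novr$ in degree $n$ by the relative de Rham theorem for the simplex. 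Integration over the top cell realizes the comparison between these two one-dimensional cohomology groups (e.g.\ the form $(n+1)!\, t_0\, dt_1\wedge\cdots\wedge dt_n$ integrates to $1$ and vanishes on every proper face), and tensoring with $D^n$ preserves the quasi-isomorphism since we are over a characteristic zero field. By the five lemma applied to the long exact sequences of cohomology, induction on $n$ starting from $TW_{\leq 0} = D^0 = Tot_{\leq 0}$ shows each $TW_{\leq n} \to Tot_{\leq n}$ is a quasi-isomorphism. Since the towers are degreewise surjective, Mittag-Leffler holds on the cohomology towers and the Milnor sequence collapses to $H^*(TW(\mathcal{D}^\bullet)) \cong \varprojlim_n H^*(TW_{\leq n}) \cong \varprojlim_n H^*(Tot_{\leq n}) \cong H^*(Tot(\mathcal{D}^\bullet))$.

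Finally, compatibility with the augmentation is a direct unwinding: both canonical maps send $d \in D^{-1}$ to the family whose $p$-th component is $f_*(d)$ tensored with the constant unit (either $1 \in \Omega^0(\Delta^p)$ or the $0$-cochain assigning $1$ to every vertex of $\Delta^p$), and these correspond under $\int_p$ since integrating the constant form $1$ over a vertex yields $1$.
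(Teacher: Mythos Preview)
Your strategy is sound but differs from the paper's, and there are a couple of slips to fix. The paper does not argue via truncation towers at all: it simply cites the explicit, simplicially compatible strong deformation retract of $\Omega^*(\Delta^\bullet)$ onto $NC^*(\Delta^\bullet)$ extending the integration map (the Dupont--Whitney contraction, see \cite[Section~7]{cheng}). Tensoring such contraction data levelwise with $\mathcal{D}^\bullet$ and passing to the equalizer yields a chain \emph{homotopy equivalence} $TW(\mathcal{D}^\bullet)\to Tot(\mathcal{D}^\bullet)$ directly, so no Milnor-type argument is needed and one even gets a stronger conclusion. Your approach has the virtue of being more self-contained, at the cost of the bookkeeping below.

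Two points in your argument need repair. First, we are over the Novikov ring $\novr$, not a field, so ``since we are over a characteristic zero field'' does not justify that $D^n\otimes(-)$ preserves the quasi-isomorphism of relative complexes; the correct reason is that $\Omega^*_{\mathrm{rel}}(\Delta^n)$ and $NC^*_{\mathrm{rel}}(\Delta^n)$ are bounded complexes of free $\novr$-modules (everything base-changes from $\mathbb{Q}$), so the quasi-isomorphism between them is already a homotopy equivalence and survives $D^n\otimes(-)$. Second, degreewise surjectivity of the chain-level towers does \emph{not} imply Mittag--Leffler on the cohomology towers, so your final displayed chain of isomorphisms is unjustified as stated; instead apply the five lemma to the two Milnor short exact sequences
\[
0\to {\varprojlim_n}^{1}\, H^{*-1}(-_{\leq n})\to H^*(-)\to \varprojlim_n H^*(-_{\leq n})\to 0,
\]
using that your levelwise isomorphisms induce isomorphisms on both $\varprojlim$ and $\varprojlim^{1}$. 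Finally, the surjectivity of $TW_{\leq n}\to TW_{\leq n-1}$ is the polynomial-forms extension lemma and is not as immediate as the $Tot$ case; it deserves a citation (cf.\ Remark~\ref{rem-hom-lim} and \cite[Lemma~9.4]{morgan}).
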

\begin{proof}
    One can even write down an explicit and universal homotopy equivalence between these two complexes extending the integration map, see \cite[Section 7]{cheng} and the references contained therein.
\end{proof}

\begin{remark}\label{rem-hom-lim}
    Using the projective model structure on unbounded chain complexes \cite[Theorem 2.3.11]{hovey} and the induced (injective) model structure on semi-simplicial complexes \cite[Theorem 5.1.3]{hovey} (using that $(\mathbb{N}^{inj})^{op}$ is an inverse category), one can show that $Tot$ and $TW$ both compute the homotopy limit of semi-cosimplicial chain complexes. This follows from \cite[Corollary 3.5]{arkhipov}. Noting that, by definition, a map of chain complexes is fibrant if it is surjective, the needed fibrancy in the injective model structure is obvious in the case of $Tot$ and it follows from the basic \cite[Lemma 9.4]{morgan} for $TW$.
\end{remark}

\begin{proposition}\label{prop-TW-reason}
    Let $\mathcal{O}$ be a (symmetric) operad over $Ch({\novr})$ and assume that we have a semi-cosimplicial object $\mathcal{A}^\bullet$ in $\mathcal{O}$-algebras. Then,  $TW(\mathcal{A}^\bullet)$ is canonically an $\mathcal{O}$-algebra so that for an augmentation as $\mathcal{O}$-algebras, the canonical map $\mathcal{A}^{-1}\to TW(\mathcal{A}^\bullet)$ is a map of $\mathcal{O}$-algebras.
\end{proposition}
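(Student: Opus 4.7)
The plan is to carry the $\mathcal{O}$-algebra structure through each level of the construction and then cut it down to the equalizer. The single decisive input is that $\Omega^*(\Delta^\bullet)$ is a semi-simplicial \emph{commutative} dga, so that the symmetry of $\mathcal{O}$ is compatible with the multiplication of differential forms.

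First I would equip each tensor product $\mathcal{A}^p \otimes \Omega^*(\Delta^p)$ with an $\mathcal{O}$-algebra structure by defining, for $\mu \in \mathcal{O}(n)$,
$$\mu(a_1 \otimes \omega_1,\ldots, a_n \otimes \omega_n) := \pm \, \mu(a_1,\ldots,a_n) \otimes (\omega_1 \wedge \cdots \wedge \omega_n),$$
with Koszul signs. Here I would verify directly that this is equivariant under $\Sigma_n$: a permutation $\sigma$ acts on the $a_i$'s via the given $\mathcal{O}$-action, while the product of forms is left invariant precisely because $\Omega^*(\Delta^p)$ is graded commutative. This is exactly the step that uses the symmetry of $\mathcal{O}$ (and is what forces us to replace $NC^*(\Delta^\bullet)$ by $\Omega^*(\Delta^\bullet)$, as noted in the paper). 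Operadic composition and compatibility with the differential are then formal from the cdga axioms of $\Omega^*(\Delta^p)$ and the $\mathcal{O}$-algebra structure of $\mathcal{A}^p$.

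Next I would endow the infinite product $\prod_p \mathcal{A}^p \otimes \Omega^*(\Delta^p)$ with the pointwise $\mathcal{O}$-algebra structure, and then check that the subspace $TW(\mathcal{A}^\bullet)$ is stable under all operations. By definition $TW(\mathcal{A}^\bullet)$ is the equalizer of two parallel maps whose components, for each $f: r \to q$ in $\mathbb{N}^{inj}$, are $f_* \otimes \mathrm{id}$ and $\mathrm{id}\otimes f^*$. Both are $\mathcal{O}$-algebra maps: $f_*: \mathcal{A}^q \to \mathcal{A}^r$ by hypothesis on $\mathcal{A}^\bullet$, and $f^* : \Omega^*(\Delta^q) \to \Omega^*(\Delta^r)$ as a cdga map (so the induced levelwise map is an $\mathcal{O}$-algebra map by the previous paragraph). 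Since the equalizer of two maps of $\mathcal{O}$-algebras is again an $\mathcal{O}$-algebra, $TW(\mathcal{A}^\bullet)$ inherits the structure canonically.

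Finally, for an augmentation $\mathcal{A}^{-1} \to \mathcal{A}^\bullet$ as $\mathcal{O}$-algebras, the canonical map $\mathcal{A}^{-1} \to TW(\mathcal{A}^\bullet)$ sends $a$ to the family whose $p$-th component is $a_p \otimes 1 \in \mathcal{A}^p \otimes \Omega^0(\Delta^p)$, where $a_p$ is the image of $a$ under the unique arrow $-1 \to p$. Since $1 \in \Omega^*(\Delta^p)$ is a strict unit for the wedge product and since each $a \mapsto a_p$ is an $\mathcal{O}$-algebra map by hypothesis, this assignment intertwines the operadic operations. The only thing I really need to be careful about is the interplay of the symmetry action with the Koszul signs — that is the only place the argument could go wrong — and as explained above, it is precisely the graded commutativity of $\Omega^*(\Delta^\bullet)$ that makes it work.
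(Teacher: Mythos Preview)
Your proof is correct and follows essentially the same route as the paper's own argument: both rest on the single fact that tensoring with a commutative dga $C$ is a symmetric lax monoidal endofunctor of $Ch(\Lambda_{\geq 0})$, so that $\mathcal{A}^p\otimes\Omega^*(\Delta^p)$ is again an $\mathcal{O}$-algebra and the structure passes to the product and then to the equalizer by naturality in both variables. The paper phrases this abstractly via the lax monoidal transformation $(D_1\otimes C)\otimes(D_2\otimes C)\to (D_1\otimes D_2)\otimes C$, whereas you write down the explicit formula $\mu(a_1\otimes\omega_1,\ldots,a_n\otimes\omega_n)=\pm\,\mu(a_1,\ldots,a_n)\otimes(\omega_1\wedge\cdots\wedge\omega_n)$; these are the same thing. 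One small slip: for a semi-\emph{cosimplicial} object and $f\colon r\to q$, the structure map goes $\mathcal{A}^r\to\mathcal{A}^q$, not $\mathcal{A}^q\to\mathcal{A}^r$ as you wrote, but this does not affect your argument.
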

\begin{proof}
    The key point is that if $C$ is a commutative dga, then $D\mapsto D\otimes C$ defines a symmetric lax  monoidal functor $Ch({\novr})\to Ch({\novr})$ with the natural transformation $$(D_1\otimes C)\otimes (D_2\otimes C)\to (D_1\otimes D_2)\otimes C$$given by the product structure of $C.$ This implies that if $A$ is an $\mathcal{O}$-algebra, then so is $A\otimes C$ automatically. This means that $\prod_{p=0}^\infty {A}^p\otimes\Omega^*(\Delta^p)$ has a canonical $\mathcal{O}$-algebra structure. 
    
    Now, notice that the $\mathcal{O}$-algebra structure that we obtain (in the notation of the previous paragraph) on $A\otimes C$ is natural in both $A$ and $C$. More precisely, if $A_1,A_2$ are $\mathcal{O}$-algebras, $A_1\to A_2$ is a map of $\mathcal{O}$-algebras and $C_1,C_2$ are cdga's, $C_1\to C_2$ is a map of algebras, then the natural map $A_1\otimes C_1\to A_2\otimes C_2$ is a map of $\mathcal{O}$-algebras. Using this,  it is easy to see that the $\mathcal{O}$-algebra structure on $\prod_{p=0}^\infty {A}^p\otimes\Omega^*(\Delta^p)$ descends to the equalizer defining $TW(\mathcal{A}^\bullet)$.
\end{proof}

\begin{remark}\label{rem-sym}
    It might be worth pointing out at this point why we would not be able to use $NC^*(\Delta^\bullet)$ in the place of $\Omega^*(\Delta^\bullet)$ for the construction in the proof of Proposition \ref{prop-TW-reason}. Instead of  $\mathcal{O}$-algebras for a general symmetric operad $\mathcal{O},$ let us consider the example of commutative algebras first. The problem is that for $A$ a commutative algebra, if the algebra structure on $C$ is not commutative, then the algebra structure that is induced on $A\otimes C$ is also not going to be commutative. In the general $\mathcal{O}$-algebras case, the naive ansatz for defining an $\mathcal{O}$-algebra structure on $A\otimes C$ from the $\mathcal{O}$-algebra structure on $A$ and the algebra structure on $C$ will not satisfy the symmetric group equivariance axiom of an $\mathcal{O}$-algebra.
\end{remark}

\begin{remark}
    There is a model structure on $\mathcal{O}$-algebras transferred using the free-forgetful adjunction to chain complexes \cite[Theorem 4.1.1]{hinich}. This again induces the injective model structure on semi-simplicial $\mathcal{O}$-algebras. By definition, again, fibrancy of a map of $\mathcal{O}$-algebras means surjectivity. Therefore, by the same reasoning as Remark \ref{rem-hom-lim}, $TW$ computes the homotopy limit of semi-cosimplicial $\mathcal{O}$-algebras for this particular model structure.
\end{remark}
 
\section{Symplectic cohomology with supports} 

\subsection{Definition}

Let us introduce symplectic cohomology with supports following \cite[Section 1.5]{abouzaid2022framed}. The details are not so important here and can be found in \cite{varolgunesmayer, tonkonog, abouzaid2022framed}. We hope to simply provide an overview to the reader. We also do not include the modifications that are necessary in the case where $M$ is geometrically bounded but not closed and refer the reader to \cite[Appendix C]{abouzaid2022framed}.

For a non-degenerate Hamiltonian $H:\rmodz \times M\to \mathbb{R}$ we denote the Hamiltonian Floer cochain complex \cite{salamon, varolgunesmayer} of $H$ over $\novr$ by $CF^*(H;\Lambda_{\geq 0}).$

We first define a dg-category $\mathcal{H}^{cyl}$ whose objects are non-degenerate Hamiltonians. Morphisms from $H_-$ to $H_+$ are (roughly speaking) chains on a carefully constructed cubical set of monotone continuation map data whose unbroken $0$-cubes are 
\begin{equation}
\{H_s:\mathbb{R}\times\rmodz \times M\to \mathbb{R}\mid H_s= H_\pm\text{ near } s=\pm\infty\text{ and } \partial_s H_s\geq 0\}.
\end{equation} Composition is given by associating the outputs with inputs to form broken data.
 
 We then construct the Floer functor \begin{equation}\mathcal{CF}:\mathcal{H}^{cyl}\to Ch^{dg}(\Lambda_{\geq 0}).\end{equation} On objects it is defined by $H\mapsto CF^*(H;\Lambda_{\geq 0})$ and for morphisms we consider continuation maps defined by virtual dimension zero parametrized moduli spaces of Floer solutions. Moreover, $\mathcal{H}^{cyl}$ has a canonical functor to the dg-category $\star$ with one object and endomorphism space isomorphic to $\novr.$ Note that the chain complex $\novr$ is not a terminal object in $Ch^{dg}(\Lambda_{\geq 0})$. This canonical augmentation exists because the category of cubical sets does have a terminal object whose normalized chains is equal to $\Lambda_{\geq 0}$.

Finally, for a compact $K\subset M$, $SC^*_M(K)$ is defined as the degreewise $T$-adic completion of the homotopy colimit of  $\mathcal{CF}$ restricted to the full subcategory $\mathcal{H}^{cyl}_K$ with objects the non-degenerate Hamiltonians that are negative on $K$: \begin{equation}\label{eq-sh-def}
  SC^*_M(K) := \widehat{\hocolim}\left(\mathcal{CF}|_{\mathcal{H}^{cyl}_K}\right).  
\end{equation}  Homotopy colimit here is defined by a two sided bar complex $B(\star,\mathcal{H}^{cyl}_K,\mathcal{CF})$ \cite{hollender, kensuke}. The underlying module of the bar complex is
\begin{equation*}
  \bigoplus_{n=0}^{\infty} \bigoplus_{(H_0, \ldots, H_n) \in Ob \mathcal{H}^{cyl}_K } CF^*(H_0;\Lambda_{\geq 0}) \otimes Hom_{\mathcal{H}^{cyl}}(H_0,H_1) \otimes \cdots \otimes Hom_{\mathcal{H}^{cyl}}(H_{n-1},H_n) 
\end{equation*} and the differential is obtained using composition in the category, the module action and the canonical augmentations of the morphism complexes.

The homology of $SC^*_M(K)$ is denoted by $SH^*_M(K)$ and is called the symplectic cohomology of $M$ with support on $K$. It will be important for us to work at the chain complex level (keeping in mind for example the generalized Mayer-Vietoris principle for de Rham cohomology \cite{bott}).

\begin{remark}\label{rem-ter}
    As already mentioned in the introduction, we had previously called $SH^*_M(K)$ the relative symplectic cohomology of $K$ inside $M.$ Let us explain the main reason for wanting to change terminology. The theory has an open-string counterpart, which is currently in development (see \cite[Section 2.3]{tonkonog} for baby steps in this direction). Eventually, there will be a Fukaya category that is associated to a compact subset. Without a change in terminology this would have to be called the relative Fukaya category which clashes with the Seidel-Sheridan terminology for the Fukaya category of a symplectic manifold relative to a divisor \cite{seideldef, sheridan}. We hope the new terminology catches on. \end{remark}

There are canonical restriction chain maps \begin{equation}
SC^*_M(K')\to SC^*_M(K),
\end{equation} for $K\subset K'$, which upgrades the structure to a presheaf over the compact subsets of $M$. The base change of the symplectic cochains with supports presheaf $SC_M^*(-)$ to the Novikov field will be denoted by $SC_M^*(-;\novf)$ and its homology by $SH_M^*(-;\novf).$

For calculations, it is better to introduce a smaller model \cite{varolgunesmayer}. We define an acceleration datum for $K\subset M$ as a sequence $H_1\leq H_2\leq\ldots$ of non-degenerate Hamiltonians $H_i: \rmodz\times M\to \bR$ satisfying $H|_{\rmodz\times K}<0$ and for every $(t,x)\in \rmodz \times M,$  $$
H_i(t,x)\xrightarrow[i\to\infty]{}\begin{cases}
0,& x\in K,\\
+\infty,& x\notin K.
\end{cases}
$$along with monotone interpolations between $H_i$ and $H_{i+1}$ for all $i\geq 1$.

This pointwise convergence condition is equivalent to the subcategory of $\mathcal{H}^{cyl}_K$ with objects $H_i$ and morphisms given by the span of the chosen $0$-chains being homotopy cofinal. In the framework of \cite[Proposition 4.4]{hollender} (also see \cite[Section 5.7]{abouzaid2022framed}), this homotopy cofinality means that for any object $G$ of $\mathcal{H}^{cyl}_K$ the canonical map from the homotopy colimit of $$\mathcal{H}^{cyl}_K(G, H_1)\to \mathcal{H}^{cyl}_K(G, H_2)\to \mathcal{H}^{cyl}_K(G, H_3)\ldots$$ to $\Lambda_{\geq 0}$ is a quasi-isomorphism (cf. the discussion of Equations (5.54)-(5.56) in \cite{abouzaid2022framed}). It is elementary to show that acceleration data exist.

The homotopy cofinality implies that the canonical inclusion of the homotopy colimit of the diagram $$\mathcal{C}_K:=CF^*(H_1 ;\Lambda_{\geq 0})\overset{\kappa_1}{\to} CF^*(H_2 ;\Lambda_{\geq 0})\overset{\kappa_2}{\to}\ldots$$ into $\hocolim\mathcal{CF}|_{\mathcal{H}^{cyl}_K}$ is a quasi-isomorphism (this is the canonical map in \cite[Theorem 1.9]{abouzaid2022framed}). It is well-known that this simpler homotopy colimit can also be computed by the telescope model $$\tel(\mathcal{C}_K):= \cone\left(\bigoplus_{i=1}^\infty CF^*(H_i ;\Lambda_{\geq 0})\overset{\kappa-id}{\longrightarrow} \bigoplus_{i=1}^\infty CF^*(H_i ;\Lambda_{\geq 0})\right),$$ where $\kappa$ is the linear map that sends the summand $CF^*(H_i ;\Lambda_{\geq 0})$ to $\bigoplus_{i=1}^\infty CF^*(H_i ;\Lambda_{\geq 0})$ by composing $\kappa_i$ with the canonical inclusion.
Denoting the degree-wise $T$-adic completion of $\tel(\mathcal{C}_K)$ by $\widehat{\tel}(\mathcal{C}_K)$ we therefore have a canonical chain map \begin{equation}\label{tel}
\widehat{\tel}(\mathcal{C}_K)\to SC^*_M(K),
\end{equation} which is also a quasi-isomorphism by \cite[Corollary 2.3.6(3)]{varolgunesmayer}. 

We note the following straightforward extension of \cite[Theorem 1.8]{abouzaid2022framed} without proof:

\begin{proposition}\label{prop-tel-compare}Assume that $X,Y$ are compact subsets of $M$. Then, we have a homotopy commutative diagram:
   \[
    \begin{tikzcd}
\widehat{\tel}(\mathcal{C}_{X\cup Y}) \arrow{rr}{}\arrow{d}{} && cocone\left(\widehat{\tel}(\mathcal{C}_{X})\oplus\widehat{\tel}(\mathcal{C}_{Y})\to \widehat{\tel}(\mathcal{C}_{X\cap Y})\right)\arrow{d}{} \\
SC^*_{M}(X\cup Y)  \arrow{rr}{} && \Cech(SC_{M}; X,Y)
\end{tikzcd}
\]where the top row constructed using appropriate acceleration data as in \cite[Section 3.4]{varolgunesmayer} and the vertical arrows are quasi-isomorphisms.\qed
\end{proposition}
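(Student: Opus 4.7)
The plan is to build the diagram from a compatible system of acceleration data and then transport the quasi-isomorphism \eqref{tel} along restriction maps.

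First, choose acceleration data $\{H_i^{X \cup Y}\}$, $\{H_i^X\}$, $\{H_i^Y\}$, $\{H_i^{X \cap Y}\}$ together with monotone interpolations realizing both the sequential inequalities within each family and the pairwise inequalities $H_i^{X \cup Y} \leq H_i^X, H_i^Y \leq H_i^{X \cap Y}$. Such a compatible system exists by a straightforward modification of the existence argument for acceleration data (alternating steps of consolidating and dominating). From these one obtains chain-level restriction maps between the four telescopes as in \cite[Section 3.4]{varolgunesmayer}. Assembling the three maps out of $\widehat{\tel}(\mathcal{C}_{X \cup Y})$ into $\widehat{\tel}(\mathcal{C}_X) \oplus \widehat{\tel}(\mathcal{C}_Y) \to \widehat{\tel}(\mathcal{C}_{X \cap Y})$ (the Mayer--Vietoris difference being taken in the middle) defines the top horizontal arrow, where the necessary secondary datum is a chain homotopy between the two composites into $\widehat{\tel}(\mathcal{C}_{X \cap Y})$, produced from a parameter family of monotone continuations interpolating between the two compositions.

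Next, the vertical maps are built from \eqref{tel}: the left vertical is \eqref{tel} for $X \cup Y$, and the right vertical is assembled from \eqref{tel} for $X$, $Y$, $X \cap Y$ as a map of (ho)cocones. The left vertical is a quasi-isomorphism by \cite[Corollary 2.3.6(3)]{varolgunesmayer}. For the right vertical, each of the three components is a quasi-isomorphism by the same corollary; since the mapping cocone construction preserves quasi-isomorphisms between exact triangles (standard five-lemma argument on the associated long exact sequences), the induced map of cocones is a quasi-isomorphism.

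The remaining task is homotopy commutativity of the outer square, which reduces to the naturality of \eqref{tel} with respect to restriction maps up to specified homotopies. This naturality is already the content of \cite[Theorem 1.8]{abouzaid2022framed}: both the telescope restriction maps and the canonical restriction maps on $SC^*_M$ arise from the same Floer functor $\mathcal{CF}$ evaluated on different cofinal subdiagrams of $\mathcal{H}^{cyl}_{(-)}$, and the bar-resolution construction of the homotopy colimit produces canonical homotopies witnessing that restriction commutes with the comparison map \eqref{tel}. Stringing these homotopies together for the three restrictions $X \cup Y \to X$, $X \cup Y \to Y$, and $X \cup Y \to X \cap Y$ yields the required homotopy for the square.

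The main obstacle is bookkeeping: one must check that all the auxiliary chain homotopies assemble consistently into a homotopy between the two composites in the outer square, rather than just between individual pairs of restriction maps. This is why the proposition is framed as an extension of \cite[Theorem 1.8]{abouzaid2022framed} rather than a direct consequence: the single-subset comparison must be upgraded to a square of compatible comparisons, and the homotopies chosen in the single-subset case must be refined to be mutually compatible. Once the compatible acceleration data of the first step are fixed, this refinement is carried out by the same inductive argument used to produce the homotopies in \cite{abouzaid2022framed}, and the rest is formal.
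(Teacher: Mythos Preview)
The paper does not actually prove this proposition: it is stated with a trailing \qed\ and introduced as ``the following straightforward extension of \cite[Theorem 1.8]{abouzaid2022framed} without proof.'' Your sketch is precisely the intended extension---choose compatible acceleration data as in \cite[Section 3.4]{varolgunesmayer}, build the top row and the vertical comparison maps from \eqref{tel}, and invoke the naturality of \eqref{tel} with respect to restriction (which is the content of the cited Theorem 1.8) to obtain homotopy commutativity---so there is nothing to compare and your argument is in line with what the paper asserts.

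One small point worth making explicit in your write-up: the right vertical map, being a map of cocones, already consumes one layer of the naturality homotopies (to witness that the three instances of \eqref{tel} intertwine the internal restriction maps of the two cocones), and the outer-square homotopy then requires a \emph{second} layer of coherence over the $X\cap Y$ slot. You flag this under ``bookkeeping,'' which is accurate, but when you write it out you should be clear that the single-restriction homotopies of \cite[Theorem 1.8]{abouzaid2022framed} must be produced compatibly for the two routes $X\cup Y\to X\to X\cap Y$ and $X\cup Y\to Y\to X\cap Y$, together with a secondary homotopy relating the two resulting homotopies; this is exactly what the bar-complex model of the homotopy colimit supplies, and is the substance of calling the proposition an ``extension'' rather than a corollary.
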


\begin{remark}
For a chain map $f:C\to D$ of $\mathbb{Z}_2$-graded chain complexes, the reference \cite{varolgunesmayer} uses the formula $C[1]\oplus D$, with the additional off-diagonal term in the differential given by $f$, for the definition of $cone(f)$. In the $\mathbb{Z}$-graded context, the accurate formula is not $C[1]\oplus D$ but $C[-1]\oplus D$. Then, $cocone(f)$ is simply the chain complex $cone(f)[1].$ It is important to notice that $cone(f)$ receives a map from $D$ and completes $f$ to an exact triangle in that direction, whereas $cocone(f)$ maps to $C$ and completes $f$ to an exact triangle in the opposite direction. Hence, following the standard usage of the co- terminology, it would have been better to say $ne$ instead of $cocone$, but this is not standard.
\end{remark}


\subsection{Descent at the linear level}
We first state and prove the statement for two subsets.

\begin{theorem}\label{thm-lin-two}Assume that $K=K_1\cup K_2$ is a weakly involutive cover. Then, the canonical chain map \begin{equation} SC^*_M(K)\to \Cech(SC_M; K_1,K_2)\end{equation} is a quasi-isomorphism.\end{theorem}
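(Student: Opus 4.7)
The plan is to reduce the statement to a Mayer--Vietoris argument at the telescope level, following the strategy of \cite[Theorem 4.8.5]{varolgunesmayer}. First, I would invoke Proposition \ref{prop-tel-compare} with $X = K_1$ and $Y = K_2$; since both vertical arrows there are quasi-isomorphisms, it suffices to show that the top horizontal arrow
\[
\widehat{\tel}(\mathcal{C}_{K_1\cup K_2}) \to cocone\bigl(\widehat{\tel}(\mathcal{C}_{K_1})\oplus\widehat{\tel}(\mathcal{C}_{K_2})\to \widehat{\tel}(\mathcal{C}_{K_1\cap K_2})\bigr)
\]
is a quasi-isomorphism for some single choice of acceleration data that is compatible across all four subsets.

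The next step is to produce such coordinated acceleration data. Starting from the weakly Poisson commuting functions $f_{1,i}, f_{2,i}$ provided by Definition \ref{def-weak-inv}, and using the explicit construction in Proposition \ref{prop-cap-cup} together with Corollary \ref{cor-weak-Pois}, I would build non-degenerate Hamiltonians $H_{1,i}, H_{2,i}, H^{\cap}_i, H^{\cup}_i$ on $\rmodz \times M$ that are monotone in $i$ and form acceleration data for $K_1, K_2, K_1\cap K_2, K_1\cup K_2$, respectively. The Hamiltonian $H^{\cap}_i$ is modeled on a smoothing of $\max(H_{1,i}, H_{2,i})$ and $H^{\cup}_i$ on a smoothing of $\min(H_{1,i}, H_{2,i})$, chosen so that every $1$-periodic orbit of $H^{\cap}_i$ or $H^{\cup}_i$ lies outside the smoothing region and agrees with a $1$-periodic orbit of either $H_{1,i}$ or $H_{2,i}$.

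The heart of the argument, and the main technical obstacle, is to establish at each fixed level $i$ a short exact Mayer--Vietoris sequence of Floer complexes
\[
0 \to CF^*(H^{\cup}_i;\novr) \to CF^*(H_{1,i};\novr)\oplus CF^*(H_{2,i};\novr) \to CF^*(H^{\cap}_i;\novr) \to 0,
\]
with maps induced by monotone continuation. This is precisely the geometric content of the argument in \cite[Theorem 4.8.5]{varolgunesmayer}: the Poisson commutativity $\{f_{1,i}, f_{2,i}\}=0$ at fixed level $i$ allows the smoothed max and min Hamiltonians to be chosen so that their Hamiltonian vector fields decompose as commuting sums on the smoothing region, which controls both the set of $1$-periodic orbits and the Floer cylinders and forces the stated splitting. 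Since the weakly involutive hypothesis demands Poisson commutativity only one level at a time, which is exactly what the Floer-theoretic step consumes, the original proof adapts verbatim; no new geometric input is needed beyond checking that the weak hypothesis still supplies enough to construct the smoothings in Proposition \ref{prop-cap-cup}, which it does.

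Finally, I would assemble the level-by-level Mayer--Vietoris short exact sequences into a short exact sequence of the uncompleted telescopes, using that the telescope construction is exact, and then pass to the $T$-adic completion. The passage preserves the resulting quasi-isomorphism by \cite[Corollary 2.3.6(3)]{varolgunesmayer}. Reinterpreting this completed short exact sequence as the cocone identification yields that the top row of Proposition \ref{prop-tel-compare} is a quasi-isomorphism, which gives the theorem.
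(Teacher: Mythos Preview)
Your proposal is correct and follows essentially the same approach as the paper: reduce via Proposition~\ref{prop-tel-compare} to showing the top telescope arrow is a quasi-isomorphism, and establish that by invoking the Mayer--Vietoris argument from \cite{varolgunesmayer} (the paper cites Theorem~4.8.2 there rather than~4.8.5), noting that the weakly involutive hypothesis supplies exactly the level-by-level Poisson commutativity that argument consumes. The paper's proof is simply a terser version of yours, treating the cited result as a black box rather than unpacking its mechanism.
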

\begin{proof}
Theorem 4.8.2 from \cite{varolgunesmayer} shows that we can make choices such that the upper horizontal arrow in the diagram of Proposition \ref{prop-tel-compare} is a quasi-isomorphism. Homotopy commutativity of the same diagram finishes the proof.
\end{proof}

We note that \cite[Theorem 4.8.1]{varolgunesmayer} is more general in that it assumes Poisson commutativity only along the ``barrier", but we do not know whether this has any use or not.

\begin{theorem}\label{thm-linear}Assume that $K=K_1\cup\ldots\cup K_N$ is a weakly involutive cover. Then, the canonical chain map \begin{equation} SC^*_M(K)\to \Cech(SC_M; K_1,\ldots,K_N)\end{equation} is a quasi-isomorphism. \end{theorem}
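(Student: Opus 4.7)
My plan is to induct on $N$, with base case $N=2$ supplied by Theorem \ref{thm-lin-two} (the case $N=1$ being trivial). For the inductive step, I set $L := K_1 \cup \ldots \cup K_{N-1}$ so that $K = L \cup K_N$. By Corollary \ref{cor-weak-Pois}, the pair $\{L, K_N\}$ is weakly Poisson commuting and therefore forms a weakly involutive cover of $K$. Likewise, the intersection $L \cap K_N = \bigcup_{m=1}^{N-1} (K_m \cap K_N)$ admits the cover $\{K_m \cap K_N\}_{m=1}^{N-1}$, which is again weakly involutive by Corollary \ref{cor-weak-Pois}.

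Applying Theorem \ref{thm-lin-two} to the two-term cover yields a quasi-isomorphism
\[
SC^*_M(K) \xrightarrow{\sim} \Cech(SC_M; L, K_N) = \operatorname{cocone}\!\bigl(SC^*_M(L) \oplus SC^*_M(K_N) \to SC^*_M(L \cap K_N)\bigr).
\]
The inductive hypothesis, applied to the covers of $L$ and $L \cap K_N$, provides quasi-isomorphisms from $SC^*_M(L)$ and $SC^*_M(L \cap K_N)$ to the respective Čech complexes, and these intertwine the restriction maps by the presheaf structure of $SC_M^*(-)$. Since forming the cocone of a chain map preserves degreewise quasi-isomorphisms (a standard five-lemma argument on the associated long exact sequences), one obtains a quasi-isomorphism from $\Cech(SC_M; L, K_N)$ to
\[
\operatorname{cocone}\!\bigl(\Cech(SC_M; K_1, \ldots, K_{N-1}) \oplus SC^*_M(K_N) \to \Cech(SC_M; K_1 \cap K_N, \ldots, K_{N-1} \cap K_N)\bigr).
\]

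The final step is to identify this iterated cocone with $\Cech(SC_M; K_1, \ldots, K_N)$ compatibly with the canonical chain maps out of $SC^*_M(K)$. I would split the direct-sum decomposition defining the $N$-term Čech complex according to whether $N \in J$. The $J \not\ni N$ contribution reassembles precisely into $\Cech(SC_M; K_1, \ldots, K_{N-1})$; the summand $J = \{N\}$ contributes $SC^*_M(K_N)$; and writing $J = J' \cup \{N\}$ with $\emptyset \neq J' \subset \{1, \ldots, N-1\}$ identifies the remaining $J \ni N$ summands with a $[1]$-shifted copy of $\Cech(SC_M; K_1 \cap K_N, \ldots, K_{N-1} \cap K_N)$. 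Under this bijection, the alternating-sign ``delete $N$ from $J$'' terms of the Čech differential exactly produce the cocone connecting map, while the remaining face terms reproduce the internal Čech differentials of the three summands.

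I expect the only real obstacle to be this last combinatorial identification, namely aligning the Čech sign convention with the cocone convention through the decomposition by $N \in J$; this is bookkeeping rather than a conceptual difficulty, since all of the substantive Floer-theoretic input is already encapsulated in Theorem \ref{thm-lin-two} and Corollary \ref{cor-weak-Pois}.
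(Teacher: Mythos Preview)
Your proposal is correct and follows essentially the same route as the paper: the paper packages the inductive step into Lemma~\ref{lem-inc-exc} (splitting off $K_1$ rather than $K_N$) and records the combinatorial identification of the iterated cocone with $\Cech(SC_M; K_1,\ldots,K_N)$ in one line, but the content is identical. The only organizational difference is that the paper first notes via Corollary~\ref{cor-weak-Pois} that \emph{all} unions and intersections of the $K_m$ are simultaneously weakly Poisson commuting, whereas you invoke that corollary step by step inside the induction; both are fine.
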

Before we give the proof, we give a lemma that is a purely algebraic statement about the presheaf of chain complexes $SC_M^*(-)$ paraphrasing \cite[Appendix B]{varolgunesmayer}. If the conclusion of Theorem \ref{thm-linear} holds, let us say that $K_1,\ldots, K_N$ satisfies descent. 

\begin{lemma}\label{lem-inc-exc}
   Let $K_1,\ldots, K_N$ be compact subsets with $N>2$. Assume that $K_1, K_2\cup\ldots\cup K_N$ satisfies descent as well as $K_2,\ldots, K_N$ and $K_1\cap K_2,\ldots, K_1\cap K_N$. Then, $K_1,\ldots, K_N$ satisfies descent. 
\end{lemma}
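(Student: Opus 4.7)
Set $L := K_2 \cup \cdots \cup K_N$; the three descent hypotheses concern the covers $\{K_1, L\}$, $\{K_2, \ldots, K_N\}$, and $\{K_1 \cap K_2, \ldots, K_1 \cap K_N\}$ (the last covering $K_1 \cap L$). The plan is to show that the canonical refinement chain map $\rho : \Cech(SC_M; K_1, L) \to \Cech(SC_M; K_1, \ldots, K_N)$, induced by the refinement sending $K_1 \mapsto K_1$ and $K_i \mapsto L$ for $i \geq 2$, is a quasi-isomorphism. The lemma will then follow from the hypothesis for $\{K_1, L\}$ together with the naturality observation that the composition $SC^*_M(K) \to \Cech(SC_M; K_1, L) \xrightarrow{\rho} \Cech(SC_M; K_1, \ldots, K_N)$ is precisely the canonical descent map for the $N$-fold cover.

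To analyze $\rho$, both Čech complexes admit the natural two-step filtration by whether the index $1$ appears in $J$, and $\rho$ respects this filtration. In $\Cech(SC_M; K_1, \ldots, K_N)$ the subcomplex (terms with $1 \in J$) splits as a graded module into $SC^*_M(K_1) \oplus \Cech(SC_M; K_1 \cap K_2, \ldots, K_1 \cap K_N)[1]$, with an augmentation-by-restriction differential from $SC^*_M(K_1)$ into the degree-zero part of the shifted Čech factor, and the quotient is $\Cech(SC_M; K_2, \ldots, K_N)$. For $\Cech(SC_M; K_1, L)$ the subcomplex is the two-term complex $SC^*_M(K_1) \to SC^*_M(K_1 \cap L)$ and the quotient is $SC^*_M(L)$. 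Under $\rho$ the induced map on quotients is the canonical descent map for $\{K_2, \ldots, K_N\}$, and the induced map on subcomplexes is the identity on the $SC^*_M(K_1)$ summand together with the canonical descent augmentation $SC^*_M(K_1 \cap L) \to \Cech(SC_M; K_1 \cap K_2, \ldots, K_1 \cap K_N)$ on the second summand. Both are quasi-isomorphisms by the stated hypotheses, so applying the five lemma to the short exact sequences of chain complexes induced by the two filtrations (first on the subcomplex, then on the whole thing) yields that $\rho$ is a quasi-isomorphism.

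The main obstacle is purely bookkeeping: one must verify that $\rho$ preserves the filtration and induces exactly the claimed maps on subquotients, which requires tracking signs in the Čech differentials and checking that the restriction directions line up correctly (in particular, that the augmentation appearing in the subcomplex identification is genuinely a chain map, which rests on the standard vanishing of augmentation composed with the first Čech differential). Once these identifications are in hand, everything else is routine homological algebra.
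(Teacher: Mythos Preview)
Your argument is correct and is essentially the same as the paper's, just spelled out in more detail. The paper identifies $\Cech(SC_M;K_1,\ldots,K_N)$ directly with the cocone of $SC^*_M(K_1)\oplus \Cech(SC_M;K_2,\ldots,K_N)\to \Cech(SC_M;K_1\cap K_2,\ldots,K_1\cap K_N)$ and observes that the map from $\Cech(SC_M;K_1,L)$ into it is built from the two smaller descent maps; your filtration by ``$1\in J$'' and the ensuing five-lemma argument is exactly the verification of this identification, so the two proofs coincide.
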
 

\begin{proof}
    We have that the canonical maps $$SC^*_M(K_1\cup\ldots\cup K_N)\to \Cech(SC_M; K_1, K_2\cup\ldots\cup K_N),$$ \begin{align*}
        \Cech(SC_M; K_1, K_2\cup\ldots\cup K_N)\to cocone(&SC^*_M(K_1)\oplus \Cech(SC_M; K_2,\ldots, K_N)\\&\to \Cech(SC_M; K_1\cap K_2,\ldots, K_1\cap K_N)) 
    \end{align*}are quasi-isomorphisms. Noticing that the cocone is nothing but $\Cech(SC_M; K_1,\ldots,K_N)$ completes the proof.
\end{proof}

\begin{proof}[Proof of Theorem \ref{thm-linear}]
Let $\mathcal{K}$ be the smallest set of subsets of $M$ that is closed under intersection and union, and contains $K_1,\ldots ,K_n$. By Corollary \ref{cor-weak-Pois}, the members of $\mathcal{K}$ are Poisson commuting. By Theorem \ref{thm-lin-two}, for any two-element list from $\mathcal{K}$ descent is satisfied. Using Lemma \ref{lem-inc-exc}, we finish the proof by induction.
\end{proof}

\subsection{Algebraic structures}\label{sec3.3}
We now discuss algebraic structures in symplectic cohomology with supports. Let us start with the homology level structure.

A BV-algebra is a graded commutative algebra $A$ equipped with a degree decreasing differential $\Delta$, called the BV operator, with the following property. It does not satisfy the graded Leibniz rule, but the error can be used to define a degree $-1$ Lie bracket on $A$, which, in particular, does satisfy the graded Leibniz rule in both of its slots. We recommend \cite[Section 2.2]{roger2009gerstenhaber} for the precise definition. A unit in a BV-algebra is a multiplicative unit that is $\Delta$-closed. 

Using the techniques of \cite{tonkonog}, $SH_M^*(K)$ can be equipped with a natural BV-algebra structure, which also admits a unit after base change to $\novf$. The restriction maps respect these structures.

We now move on to the chain-level structure, the statement of which requires the terminology from Section \ref{sec2.2}.

\begin{theorem}[\cite{abouzaid2022framed}]
  \label{thm-main-thm}
  Associated to any compact subset $K\subset M$ is a complete torsion-free chain complex $SC^*_{M,big}(K)$ over the Novikov ring, which is equipped with the following structures: \begin{enumerate}
  \item An action of the KSV operad. 
  \item A restriction map for each inclusion $K \subset K'$ of compact subsets
    \begin{equation*}
          SC^*_{M,big}(K') \to  SC^*_{M,big}(K),
        \end{equation*}
        which is compatible with the operadic action and satisfies the properties of a presheaf.
         \item There is a canonical quasi-isomorphism
    \begin{equation*} \label{eq:quasi-isomorphism_big-small}
          SC^*_{M}(K) \to   SC^*_{M,big}(K),
        \end{equation*}
        which is compatible with restrictions maps. 

      \end{enumerate}
    \end{theorem}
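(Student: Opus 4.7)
My plan is to construct $SC^*_{M,big}(K)$ by enlarging the dg-category $\mathcal{H}^{cyl}_K$ of the excerpt to a dg-multicategory (or coloured dg-operad) $\mathcal{H}^{fr}_K$ whose morphism complexes from an input tuple $(H_1,\ldots,H_n)$ to an output $H_0$ are chains on a cubical set of \emph{framed, domain-dependent monotone Floer data} on $(n{+}1)$-punctured genus-zero Riemann surfaces parameterized over $f\overline{\mathcal{M}}_{0,n+1}^{\mathbb{R}}$, with Hamiltonians monotone from inputs to output. The previous $\mathcal{H}^{cyl}_K$ sits inside as the $n=1$ piece. Composition is given by gluing domains at punctures using the prescribed tangent-ray framings to fix the angular parameter, exactly mirroring the operadic composition maps of KSV. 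Extending $\mathcal{CF}$ by counting virtual-dimension-zero parameterized Floer solutions, I define $SC^*_{M,big}(K)$ as the degreewise $T$-adic completion of the multicategorical bar construction $B(\star,\mathcal{H}^{fr}_K,\mathcal{CF})$ with inputs and outputs running over Hamiltonians that are negative on $K$.

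The KSV-action is then baked into the construction: the action of $\operatorname{KSV}(n) = C_{-*}(f\overline{\mathcal{M}}_{0,n+1}^{\mathbb{R}};\novr)$ on the bar complex is obtained by inserting a chosen cubical chain in the parameter space into the morphism spaces of $\mathcal{H}^{fr}_K$ and then applying the Floer functor. Cubical operad coherence reduces to the compatibility of gluing of Riemann surfaces with cubical boundaries of parameter chains, and symmetric-group equivariance follows from the symmetry of the underlying framed moduli under relabeling of input punctures. Both the presheaf property and operadic compatibility of restriction maps are automatic: for $K \subset K'$ the inclusion of full sub-multicategories $\mathcal{H}^{fr}_{K'} \hookrightarrow \mathcal{H}^{fr}_K$ induces the restriction map via functoriality of the bar construction, and since operations live at the level of $\mathcal{H}^{fr}_{-}$ itself they commute with this inclusion on the nose.

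The canonical quasi-isomorphism $SC^*_M(K) \to SC^*_{M,big}(K)$ comes from the inclusion of the $n=1$ piece; on morphism complexes this inclusion is a quasi-isomorphism because the forgetful map $f\overline{\mathcal{M}}_{0,n+1}^{\mathbb{R}} \to \overline{\mathcal{M}}_{0,n+1}$ has fibres that are iterated circles whose chains can be contracted against the underlying Floer moduli in the same way continuation-map data behave, so the larger homotopy colimit deformation-retracts onto the smaller one. After verifying that this retraction respects the monotonicity filtration on actions, passage to $T$-adic completions preserves quasi-isomorphism via an argument analogous to \cite[Corollary 2.3.6(3)]{varolgunesmayer}.

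The main obstacle is analytic: one must construct coherent virtual fundamental chains on the framed moduli spaces of Floer solutions, matching the cubical boundary strata to the nodal degenerations of $f\overline{\mathcal{M}}_{0,n+1}^{\mathbb{R}}$, and one must arrange uniform action estimates so that the infinite sums in the $T$-adic completion of the bar construction converge after applying higher-arity operations. A delicate secondary point is to choose the cubical chain model symmetrically enough that the symmetric-group equivariance axiom of a genuine KSV-algebra (rather than just a homotopy KSV-algebra, which is what the naive construction would give) holds strictly; this is where the symmetric normalized cubical chains of Section \ref{sec2.2} and the characteristic-zero hypothesis play their essential role.
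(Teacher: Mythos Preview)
The paper does not actually prove this theorem: its entire proof reads ``See Theorem 1.4 and Theorem 1.9 of \cite{abouzaid2022framed}.'' The result is quoted from the companion paper, so there is no argument here to compare yours against beyond that citation. What you have written is a sketch of how the construction in \cite{abouzaid2022framed} might go, and at the level of architecture (enlarge $\mathcal{H}^{cyl}_K$ to a coloured operad indexed by framed genus-zero surfaces, extend the Floer functor, take a multicategorical bar construction, read off the KSV action, get restriction maps from inclusions of sub-multicategories) it is in the right spirit.

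That said, your justification of item (3) has a genuine gap. You argue that the inclusion of the $n=1$ piece is a quasi-isomorphism ``because the forgetful map $f\overline{\mathcal{M}}_{0,n+1}^{\mathbb{R}} \to \overline{\mathcal{M}}_{0,n+1}$ has fibres that are iterated circles whose chains can be contracted against the underlying Floer moduli.'' This reasoning does not do what you need. For $n\geq 2$ the multicategory has morphism spaces that simply have no counterpart in $\mathcal{H}^{cyl}_K$, so there is no ``inclusion of morphism complexes'' to which a fibrewise-contraction argument could apply; and the circle fibres of the real blow-up are not something one can contract away (they carry the BV operator, for instance). The correct statement is that the multicategorical bar complex, \emph{as a chain complex}, is quasi-isomorphic to the categorical one; proving this requires a separate cofinality-type argument showing that the extra summands coming from higher-arity composable strings are acyclic, not an appeal to the topology of $f\overline{\mathcal{M}}_{0,n+1}^{\mathbb{R}}$. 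Without that, your outline does not establish the quasi-isomorphism $SC^*_M(K)\to SC^*_{M,big}(K)$.
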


\begin{proof}
    See Theorem 1.4 and Theorem 1.9 of \cite{abouzaid2022framed}.
\end{proof}
    
The BV algebra structure that we obtain on $H^*(SC^*_{M,big}(K))\simeq SH^*_M(K)$ agrees with the aforementioned one. We also obtain a canonical homotopy BV-algebra structure on $SC^*_{M,big}(K)$.

\begin{remark}
    In fact, this is not the full algebraic structure of symplectic cohomology with support, as one can consider the operations induced by positive genus Riemann surfaces and also allow more than one output. This structure of an algebra over the ``Plumbers' PROP" is currently under construction by Yash Deshmukh. It is important to point out that formulating descent for this properadic structure is considerably more challenging. Let us be content with a simple example. Consider a presheaf of coalgebras whose underlying presheaf of modules satisfy the sheaf property. Then, the coalgebra structure on a disjoint union of two subsets is not uniquely determined by the two pieces as coalgebras. This is in contrast with what happens for a sheaf of algebras.
\end{remark}

\section{Descent with algebraic structures}

Let $K_1,\ldots, K_M$ be compact subsets of $M$. We 
apply the Thom-Whitney construction from Proposition \ref{prop-TW-reason}  to $\mathcal{O}$ being the KSV algebra and $\mathcal{A}^\cdot$ the semi-cosimplicial KSV-algebra $$\mathcal{SC}^\bullet:=p\mapsto \bigoplus_{|J|=p+1}SC^*_{M,big}(\bigcap_{m\in J}K_m),$$ to obtain another KSV algebra $$TW(SC^*_{M,big};K_1,\ldots, K_M).$$

\begin{theorem}\label{thm} Assume that $K=K_1\cup\ldots\cup K_N$ is a weakly involutive cover. Then, the canonical map
\begin{equation} SC^*_{M,big}(K)\to TW(SC_{M,big}; K_1,\ldots,K_N)\end{equation} is a quasi-isomorphism of KSV-algebras.
\end{theorem}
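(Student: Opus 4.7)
The map itself is automatically a morphism of KSV-algebras by Proposition \ref{prop-TW-reason} applied to $\mathcal{O}=\text{KSV}$ and to the augmented semi-cosimplicial KSV-algebra obtained from $\mathcal{SC}^\bullet$ with augmentation $SC^*_{M,big}(K)$; the required compatibility of restriction maps with the KSV-action is part of Theorem \ref{thm-main-thm}(2). So the entire content of the theorem is that this map of KSV-algebras is a quasi-isomorphism at the level of underlying chain complexes, and we are free to forget the algebraic structure while checking this.

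The plan is to reduce the quasi-isomorphism assertion to the linear descent statement Theorem \ref{thm-linear}, via the comparison between $SC^*_M(-)$ and $SC^*_{M,big}(-)$ and between $TW$ and $\Cech$. Concretely, I would assemble the commutative diagram
\[
\begin{tikzcd}
SC^*_M(K) \arrow{r}\arrow{d} & \Cech(SC_M; K_1,\ldots,K_N) \arrow{d} \\
SC^*_{M,big}(K) \arrow{r}\arrow{d} & \Cech(SC_{M,big}; K_1,\ldots,K_N) \\
TW(SC_{M,big}; K_1,\ldots,K_N) \arrow{ur} &
\end{tikzcd}
\]
where the top square uses the comparison quasi-isomorphism of Theorem \ref{thm-main-thm}(3) on each intersection, and the diagonal arrow is $TW\to Tot\cong \Cech$ from Propositions \ref{prop-TW-Tot} and \ref{prop-Tot-Cech}. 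Theorem \ref{thm-linear} gives that the top horizontal is a quasi-isomorphism. The left vertical arrow in the top square is a quasi-isomorphism by Theorem \ref{thm-main-thm}(3). The right vertical is a quasi-isomorphism because it is a termwise quasi-isomorphism of finite-length \v{C}ech complexes (the finiteness of $N$ making the obvious \v{C}ech-degree filtration bounded, so a simple spectral sequence or induction argument applies). Hence the middle horizontal $SC^*_{M,big}(K)\to \Cech(SC_{M,big};K_1,\ldots,K_N)$ is a quasi-isomorphism, and the diagonal $TW\to \Cech$ is one by Propositions \ref{prop-TW-Tot}--\ref{prop-Tot-Cech}. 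Two-out-of-three then forces the vertical arrow $SC^*_{M,big}(K)\to TW(SC_{M,big};K_1,\ldots,K_N)$ to be a quasi-isomorphism, which is exactly the claim.

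The step that requires the most care, and which I regard as the main bookkeeping obstacle, is checking genuine commutativity of the displayed square and of the triangle involving $TW$. This amounts to verifying that the comparison maps $SC^*_M(-)\to SC^*_{M,big}(-)$ of Theorem \ref{thm-main-thm}(3) assemble into a morphism of the semi-cosimplicial chain complexes used in both \v{C}ech and Thom-Whitney constructions, and that the canonical augmentation maps fit into these after applying $Tot$ or $TW$. Both facts follow from the functoriality asserted in Theorem \ref{thm-main-thm}(2) and the naturality built into $TW$ (Proposition \ref{prop-TW-reason}) and into the integration map $\Omega^*(\Delta^\bullet)\to NC^*(\Delta^\bullet)$, but they need to be recorded explicitly so that the two-out-of-three argument above is legitimate.
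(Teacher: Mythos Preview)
Your proposal is correct and follows essentially the same route as the paper's proof: reduce to the linear descent of Theorem \ref{thm-linear} via the comparison $SC^*_M(-)\to SC^*_{M,big}(-)$ on the \v{C}ech side, then invoke the $TW\to Tot\cong\Cech$ quasi-isomorphism and two-out-of-three. The only difference is expository: you spell out why the right vertical \v{C}ech comparison is a quasi-isomorphism (termwise quasi-isomorphism plus bounded filtration) and flag the commutativity bookkeeping, whereas the paper leaves both implicit.
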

\begin{proof}
    By construction, the map respects the KSV structures (recall Proposition \ref{prop-TW-reason}), so all we need to show is that the map induces an isomorphism on homology. It follows from Theorem \ref{thm-main-thm} that we have a commutative diagram 
    \[
    \begin{tikzcd}
SC^*_{M}(K) \arrow{rr}{}\arrow{d}{} && \Cech(SC_{M}; K_1,\ldots,K_N)\arrow{d}{} \\
SC^*_{M,big}(K)  \arrow{rr}{} && \Cech(SC_{M,big}; K_1,\ldots,K_N)
\end{tikzcd}
\] with vertical arrows being quasi-isomorphisms.
    Theorem \ref{thm-linear} says that the upper arrow is a quasi-isomorphism and therefore the lower arrow is a quasi-isomorphism.

    Moreover, by Propositions \ref{prop-Tot-Cech} and \ref{prop-TW-Tot}, we have a canonical commutative diagram \[
    \begin{tikzcd}
 & SC^*_{M,big}(K) \arrow{dr}{}\arrow{dl}{}& \\
 TW(SC_{M,big}; K_1,\ldots,K_N) \arrow{rr}{} && \Cech(SC_{M,big}; K_1,\ldots,K_N)
\end{tikzcd}
\] with the horizontal arrow a quasi-isomorphism. This finishes the proof.
\end{proof}

\begin{corollary} Assume that $K=K_1\cup\ldots\cup K_N$ is an involutive cover. Then, the canonical map
\begin{equation} SC^*_{M,big}(K)\to TW(SC_{M,big}; K_1,\ldots,K_N)\end{equation} is a $BV_\infty$ quasi-isomorphism of homotopy BV-algebras.
\end{corollary}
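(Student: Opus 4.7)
The plan is to reduce the corollary to Theorem \ref{thm} by transporting structure along the functor from KSV-algebras with strict morphisms to homotopy BV-algebras with $BV_\infty$-maps that was recalled at the end of Section \ref{sec2.2}. First I would observe that every involutive cover is, in particular, a weakly involutive cover, so the hypotheses of Theorem \ref{thm} are met, and we obtain a strict morphism of KSV-algebras
\begin{equation*}
SC^*_{M,big}(K)\to TW(SC_{M,big}; K_1,\ldots,K_N)
\end{equation*}
whose underlying chain map is a quasi-isomorphism.

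Next I would invoke the formality of the KSV operad in characteristic $0$ (as recalled in Section \ref{sec2.2}), which gives a zig-zag of operad quasi-isomorphisms between KSV and the BV operad, and hence between KSV and the cofibrant resolution of BV from \cite{vallette}. Restriction along the composite operad map turns any KSV-algebra into a homotopy BV-algebra without altering the underlying chain complex, and turns any strict morphism of KSV-algebras into a strict morphism, and so in particular a $BV_\infty$-map, of homotopy BV-algebras; again the underlying chain map is unchanged. This is precisely the functor mentioned in Section \ref{sec2.2}.

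Applying this functor to the strict KSV quasi-isomorphism from Theorem \ref{thm}, I get a $BV_\infty$-map of homotopy BV-algebras whose underlying chain map is a quasi-isomorphism. By definition a $BV_\infty$-map is a quasi-isomorphism of homotopy BV-algebras exactly when its linear (underlying chain) component is a quasi-isomorphism, so this finishes the proof.

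Honestly, there is no serious obstacle to the corollary once Theorem \ref{thm} is in hand: the content is entirely in the strict statement, and the passage to homotopy BV-algebras and $BV_\infty$-maps is the formal operadic transfer provided by formality of KSV. The one mild sanity check worth spelling out is that the transfer functor is indeed the identity on underlying chain complexes and chain maps, so that ``quasi-isomorphism'' is preserved on the nose.
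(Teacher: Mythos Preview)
Your argument is correct and is exactly what the paper intends: the corollary is stated without proof because it follows immediately from Theorem~\ref{thm} via the functor from KSV-algebras with strict morphisms to homotopy BV-algebras with $BV_\infty$-maps recalled at the end of Section~\ref{sec2.2}, together with the fact that involutive covers are weakly involutive. Your elaboration of how that functor is built (restriction along an operad map from the cofibrant replacement of BV to KSV obtained from formality) and why it preserves underlying chain complexes and quasi-isomorphisms is a reasonable unpacking of what the paper leaves implicit.
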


\section{Mirror symmetry}
We now explain with lightning speed a very long program that is aimed at a conceptualization of mirror symmetry. Our goal is to highlight the role played by the local-to-global principles of the last section. We do not touch upon homological mirror symmetry. The whole section should be thought of as conjectural.

Let us now assume that $M$ is closed and graded with a weakly involutive cover $M=\bigcup_{i=1}^NC_i,$ e.g. we have in mind Remark \ref{rem-deg} with the assumption that the pair of the total space and the special fiber form a log CY pair \cite{gross}. 

In the mirror side, which is in the world of rigid analyic geometry \cite{bosch, fresnel} for our purposes here, we will consider a smooth (meaning that the tangent sheaf $TY$ \cite[Section 9.1]{ardakov} is locally free) rigid analytic space $Y$ over $\Lambda$ with an admissible affinoid cover $Y=\bigcup_{i=1}^NY_i$ and a global non-vanishing section of the canonical bundle $\bigwedge^nT^*Y.$ By definition, $TY(Y_J)$ is the Lie algebra $\text{Der}(\mathcal{O}_J)$ of $\Lambda$-linear derivations $O_J\to O_J$ (automatically bounded), where $\mathcal{O}_J$ is the algebra of functions on the affinoid domain $Y_J:=\bigcap_{m\in J}Y_m$ for non-empty $J\subset \{1,\ldots ,N\}.$ Note that we have a $BV$ (and hence a $BV_\infty$) structure on $Sym^*_{\mathcal{O}_J}(\text{Der}(\mathcal{O}_J)[1]),$ where $Sym^*_{\mathcal{O}_J}$ stands for graded symmetric power. \cite[Section 2.1]{barannikov}. 

We can now assume that mirror symmetry holds locally and indicate how to deduce a global form of mirror symmetry as an application of the local-to-global principles. We state this as a conjecture being as precise as we can.

\begin{conjecture}\label{thm-mirror}
Assume that for all non-empty $J\subset \{1,\ldots ,N\},$ we have a $BV_\infty$ quasi-isomorphism of homotopy BV-algebras \begin{equation}\label{eq-local-mirror}
    SC^*_{M,big}(\bigcap_{m\in J}C_m; \Lambda)\to Sym^*_{\mathcal{O}_J}(\text{Der}(\mathcal{O}_J)[1])
\end{equation} compatibly with respect to restriction maps. Then, we have a $BV_\infty$ quasi-isomorphism of homotopy BV-algebras $$SC^*_{M,big}(M; \Lambda)\simeq C^*(M;\Lambda)\to TW(Y,\bigwedge TY),$$where $\bigwedge TY$ denotes the sheaf of polyvector fields and we are applying the Thom-Whitney construction with respect to $Y=\bigcup_{i=1}^NY_i$ in the right hand side. 
\end{conjecture}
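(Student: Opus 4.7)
The plan is to chain together three $BV_\infty$ quasi-isomorphisms: the descent statement of Theorem \ref{thm} for the cover $M=C_1\cup\ldots\cup C_N$, the Thom--Whitney totalization of the assumed local mirror equivalences, and an on-the-nose identification of the mirror Thom--Whitney complex with $TW(Y,\bigwedge TY)$. First I would invoke Theorem \ref{thm}, base-changed to $\Lambda$, to produce the $BV_\infty$ quasi-isomorphism
\[ SC^*_{M,big}(M;\Lambda) \longrightarrow TW(SC_{M,big}(-;\Lambda);\, C_1,\ldots,C_N). \]
The left-hand side is canonically quasi-isomorphic to $C^*(M;\Lambda)$ as a homotopy BV-algebra, which is the $BV_\infty$ enhancement of the PSS isomorphism recalled in the introduction in the closed case $K=M$. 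This reduces the problem to comparing the Thom--Whitney totalization of $\mathcal{SC}^\bullet(\Lambda)$ with its mirror counterpart.

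Next I would package the local hypothesis \eqref{eq-local-mirror} as a morphism of semi-cosimplicial homotopy BV-algebras. Setting $\mathcal{P}^p := \bigoplus_{|J|=p+1} Sym^*_{\mathcal{O}_J}(\text{Der}(\mathcal{O}_J)[1])$ with the Schouten--Nijenhuis BV structure and face maps induced by the affinoid restrictions of $\bigwedge TY$, the assumption supplies, for each $p$, a $BV_\infty$ quasi-isomorphism $\mathcal{SC}^p(\Lambda) \to \mathcal{P}^p$ commuting with face maps. Applying $TW$ then yields a $BV_\infty$ quasi-isomorphism
\[ TW(SC_{M,big}(-;\Lambda);\, C_1,\ldots,C_N) \longrightarrow TW(\mathcal{P}^\bullet), \]
since $TW$ models the homotopy limit in the transferred model structure on homotopy BV-algebras (cf.\ the remark after Proposition \ref{prop-TW-reason}) and homotopy limits of levelwise quasi-isomorphisms are quasi-isomorphisms.

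Finally I would identify $TW(\mathcal{P}^\bullet)$ with $TW(Y,\bigwedge TY)$. Smoothness of $Y$ gives $\bigwedge TY(Y_J) = Sym^*_{\mathcal{O}_J}(\text{Der}(\mathcal{O}_J)[1])$ on each affinoid intersection, so $\mathcal{P}^\bullet$ is precisely the semi-cosimplicial homotopy BV-algebra built from the sheaf $\bigwedge TY$ and the admissible cover $\{Y_i\}$, and its Thom--Whitney totalization is the right-hand side of the conjecture by construction. Composing the three arrows then gives the desired global $BV_\infty$ quasi-isomorphism.

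The main obstacle is the middle step: the hypothesis provides $BV_\infty$ morphisms, which are morphisms in the homotopy category of homotopy BV-algebras rather than strict maps, and applying $TW$ at the nose requires either strictifying this data into an actual semi-cosimplicial object in homotopy BV-algebras with strict face-compatible morphisms, or developing a descent property for $TW$ in the $\infty$-categorical setting that upgrades levelwise $BV_\infty$ quasi-isomorphisms coherent with face maps to a $BV_\infty$ quasi-isomorphism on totalizations. Carrying this out rigorously requires genuine operadic homotopy theory beyond what is developed in this note, which is presumably why the statement is framed as a conjecture.
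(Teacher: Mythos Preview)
The paper does not prove this statement; it is explicitly labeled a \emph{conjecture} and no proof is given. Your outline is precisely the strategy the paper has in mind---it is the $BV_\infty$ upgrade of the argument the paper \emph{does} carry out for the weaker Theorem \ref{cor-mirror} (where the local maps are assumed to respect the product on the nose), and your three-step chain (descent via Theorem \ref{thm}, totalize the local equivalences, identify the mirror totalization) matches that proof exactly.

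You have also correctly located the obstruction. The paper's remark following the conjecture says the same thing in different words: the phrase ``compatibly with respect to restriction maps'' for $BV_\infty$ morphisms is deliberately left vague because ``the relevant notion of $BV_\infty$ homotopies has not been sufficiently developed in the literature,'' and the paper notes that a complete proof \emph{would} be available if one used strict KSV-algebra maps instead. One small correction to your middle step: the remark after Proposition \ref{prop-TW-reason} concerns the transferred model structure on \emph{strict} $\mathcal{O}$-algebras with strict morphisms, not on homotopy BV-algebras with $BV_\infty$ maps, so it does not directly license applying $TW$ to a diagram whose arrows are $BV_\infty$ morphisms---but you acknowledge exactly this in your final paragraph, so there is no real error in your assessment.
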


\begin{remark}
    We are being vague about what it means for the local $BV_\infty$ quasi-isomorphisms in Equation \eqref{eq-local-mirror} to be compatible with restriction maps, as the relevant notion of $BV_\infty$ homotopies has not been sufficiently developed in the literature. Nevertheless, a compatibility of this form is a non-trivial condition. It is best to consider an example following \cite[Section 1.2]{abouzaid}. For the Thurston manifold with an involutive cover lifted from a good cover of the base of its Lagrangian torus fibration without a Lagrangian section and a non-archimedean abelian variety (with an appropriate cover) as the mirror, one can find local $BV_\infty$ quasi-isomorphisms as in Equation \eqref{eq-local-mirror} but cannot make these compatible with restriction maps altogether. The problem can be solved by equipping the rigid analytic space with a gerbe but we will not discuss this further.

    We could have written a statement with a complete proof if instead of homotopy BV-algebras and $BV_\infty$ quasi-isomorphisms we used KSV algebras (by formality, BV algebras can be functorially turned to KSV algebras) and maps of KSV algebras. This would be a much less useful statement, as one cannot hope to produce quasi-isomorphisms as in Equation \eqref{eq-local-mirror} which strictly respect the KSV actions.
\end{remark}


Given a chain complex $A$ with a homotopy BV algebra structure and a null-homotopy of the circle action, we obtain a hypercommutative algebra structure on $H(A)$ \cite{drummond, khoroshkin2013hypercommutative}. Incorporating the cyclic (in the operadic sense) structures in these results, we recover the full genus $0$ cohomological field theories \cite{barannikov, manin}. This is the main reason why it is important to state these results at the chain level. The discussion of these two extra pieces of structure is beyond the scope of this note.

The direct homology level implication (see \cite[Theorem 5, Section 6.2]{bosch} for the fact that we can compute sheaf cohomology using our affinoid cover) of Theorem \ref{thm-mirror} is that we have an isomorphism of BV-algebras $$QH^*(M;\Lambda)\to H^*(Y,\bigwedge TY).$$ This is weaker than it might appear at first sight since it is known that the BV operators vanish on both sides (assuming $Y$ is also proper, which is typical here, see Remark \ref{rem-proper}). Nevertheless, it says that the mirror of the small quantum product is the exterior product structure on the sheaf cohomology of polyvector fields, which is worth recording as a seperate statement. We make unrealistic assumptions and give a full proof.

\begin{theorem}\label{cor-mirror}
We pick a point on $f\overline{\mathcal{M}}_{0,2+1}^\mathbb{R}$ and consider $SC^*_{M,big}(K; \Lambda)$'s only with the corresponding product structure.

Assume that for all non-empty $J\subset \{1,\ldots ,N\},$ we have a quasi-isomorphism of chain complexes \begin{equation}\label{eq-loc-com}
    SC^*_{M,big}(\bigcap_{m\in J}C_m; \Lambda)\to Sym^*_{\mathcal{O}_J}(\text{Der}(\mathcal{O}_J)[1]) 
\end{equation}compatible with the algebra structures on the nose. Then, we have an isomorphism of algebras $$QH^*(M;\Lambda)\to H^*(Y,\bigwedge TY),$$where $\bigwedge TY$ denotes the sheaf of polyvector fields. 
\end{theorem}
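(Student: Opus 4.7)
The plan is to construct a chain-level totalization on both sides using the Thom-Whitney construction, glue the two sides via the assumed local strict algebra quasi-isomorphisms, and then pass to cohomology.

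First, define the semi-cosimplicial objects $\mathcal{SC}^\bullet$ by $p \mapsto \bigoplus_{|J|=p+1} SC^*_{M,big}(\bigcap_{m\in J} C_m;\novf)$ and $\mathcal{PV}^\bullet$ by $p \mapsto \bigoplus_{|J|=p+1} Sym^*_{\mathcal{O}_J}(\text{Der}(\mathcal{O}_J)[1])$, with face maps induced by restriction. Viewing each as a semi-cosimplicial object in algebras over the free symmetric operad $\mathcal{O}$ generated by a single binary operation (so that Proposition \ref{prop-TW-reason} applies), the local strict algebra quasi-isomorphisms of Equation \eqref{eq-loc-com} assemble into a term-wise strict algebra quasi-isomorphism $\mathcal{SC}^\bullet \to \mathcal{PV}^\bullet$. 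Proposition \ref{prop-TW-reason} then yields a strict algebra map $TW(\mathcal{SC}^\bullet) \to TW(\mathcal{PV}^\bullet)$, and since $TW$ computes the homotopy limit (Remark \ref{rem-hom-lim}), this map is a quasi-isomorphism.

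Second, I invoke Theorem \ref{thm} (specialized to the chosen binary operation and base-changed to $\novf$) to obtain a strict algebra quasi-isomorphism $SC^*_{M,big}(M;\novf) \to TW(\mathcal{SC}^\bullet)$. On the mirror side, Propositions \ref{prop-Tot-Cech} and \ref{prop-TW-Tot} identify $TW(\mathcal{PV}^\bullet)$, up to quasi-isomorphism, with the \v{C}ech complex of the sheaf $\bigwedge TY$ with respect to the cover $\{Y_i\}$; combining this with Tate's acyclicity theorem for coherent sheaves on affinoids (see \cite[Theorem 5, Section 6.2]{bosch}) yields, compatibly with multiplication, $H^*(TW(\mathcal{PV}^\bullet)) \cong H^*(Y,\bigwedge TY)$. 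Taking cohomology of the resulting zig-zag, and using that the PSS isomorphism identifies $SH^*_M(M;\novf)$ with $QH^*(M;\novf)$ as algebras when $M$ is closed, completes the argument.

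The step I expect to be the main obstacle is the homotopy-invariance claim that $TW$ sends a term-wise quasi-isomorphism of semi-cosimplicial algebras to a quasi-isomorphism of $TW$-algebras. One option is to invoke the injective model structure from Remark \ref{rem-hom-lim}; fibrancy is automatic there since every surjection is fibrant. Alternatively, bypassing model-theoretic concerns, one may filter $TW$ by cosimplicial degree and observe that the associated spectral sequence collapses a term-wise quasi-isomorphism to a quasi-isomorphism already at the $E_2$-page, giving the required statement directly. A secondary subtlety is the multiplicativity of the PSS identification used at the end, but this is standard.
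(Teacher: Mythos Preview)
Your proof is correct and reaches the same conclusion, but the paper takes a more elementary route. Rather than passing through the Thom--Whitney construction, the paper works directly with the \v{C}ech complex equipped with the Bott--Tu cup product \cite[Equation~(14.24)]{bott}; since only a single binary operation is at stake, the non-symmetric setting suffices and Proposition~\ref{prop-TW-reason} is not needed. The augmentation map $SC^*_{M,big}(M;\novf)\to\Cech(SC_{M,big};C_1,\ldots,C_N)$ is multiplicative on the nose (its image sits in \v{C}ech degree~$0$ and restriction maps respect the product), and the assumed local strict algebra maps, being compatible with restrictions, assemble into an algebra map between the two \v{C}ech complexes; the result follows by passing to cohomology and invoking Theorem~\ref{thm-linear}. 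Your route via $TW$ is heavier but has the virtue of reusing Theorem~\ref{thm} verbatim and of generalising unchanged to the full KSV structure. One small point to tighten: the identification $H^*(TW(\mathcal{PV}^\bullet))\cong H^*(Y,\bigwedge TY)$ \emph{as algebras} does not follow from Propositions~\ref{prop-Tot-Cech} and~\ref{prop-TW-Tot} alone, since the integration map $\Omega^*(\Delta^\bullet)\to NC^*(\Delta^\bullet)$ is not multiplicative; you should cite the standard fact (e.g.\ \cite{aznar}) that for a semi-cosimplicial cdga the $TW$ and \v{C}ech products agree on cohomology.
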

\begin{proof} Using the formula from \cite[Equation (14.24)]{bott}, we equip the relevant Cech complexes with a product.
    We can identify $QH^*(M;\Lambda)$, which is isomorphic to $SC^*_{M,big}(M; \Lambda)$ as an algebra, with the cohomology of $\Cech(SC_{M,big}; C_1,\ldots,C_N)$ as an algebra using Theorem \ref{thm-linear}, cf. page 175 of \cite{bott}.  We can do this on the mirror side as well. By the various compatibilities we are given, the map induced on Cech complexes is a map of algebras, which finishes the proof. 
\end{proof}
\begin{remark} One might naively think that if the maps of Equation \eqref{eq-loc-com} are compatible with the restriction maps and the product structure not on the nose but only at the homology level, the conclusion of Theorem \ref{cor-mirror} would stay true. This is not correct because the closed elements of the Cech complexes are not built solely from closed elements of its summands. Even if we assume that the compatibility with restriction maps is on the nose, homology level compatibility with the product does not seem sufficient. We are not aware of a notion weaker than compatibility with the product on the nose that would allow us to conclude.
    A more realistic version of this statement would also assume that the maps of Equation \eqref{eq-loc-com} are compatible with the restriction maps only up to coherent homotopies. In this case, to conclude, one would also need compatibility of these homotopies with the product structure.

    It is well-known that the algebra structure on $SC^*_{M,big}(K; \Lambda)$ can be upgraded to an $A_\infty$-structure. We can do this in such a way that restriction maps strictly respect these structures. It seems plausible that assuming the maps of Equation \eqref{eq-loc-com} to be $A_\infty$-quasi-isomorphisms which respect the restriction maps up to  coherent homotopies of $A_\infty$-maps (which can be formulated using the viewpoint of \cite[Remark 1.11]{seidel2008fukaya}) would do the job. On the other hand, at least at first glance, the coherence data related to the lack of associativity seems irrelevant for what we are trying to do.
\end{remark}

\begin{remark}\label{rem-proper}
    Typically the rigid analytic spaces that appear in these statements will be proper (for example their polyvector field cohomologies will be finite dimensional) because we restricted ourselves to closed symplectic manifolds. The story extends to certain open and geometrically bounded symplectic manifolds as well, see \cite{groman2022closed}. Mirrors to specially chosen finite volume domains of such symplectic manifolds would fall into the framework above without being proper.
\end{remark}

We have a geometric context in which we expect to be able to construct $Y$ from $M=\bigcup_{i\in I}C_i$ by gluing the affinoid domains $$Y_i:=MaxSpec(HF_M^*(C_i; S; \Lambda)),$$ for some reference Lagrangian submanifold $S.$ The reference Lagrangian $S$ is an abstraction of a Lagrangian section of an SYZ fibration, and it satisfies conditions that lead to the algebras $HF_M^*(C_i; S; \Lambda)$ being affinoid (in particular commutative) algebras supported in degree $0$ as well as a certain local generation criterion. The latter means that \begin{equation}\label{eq-loc-gen}
HH_{-n}(CF^*_M(C_i;S;\Lambda))\to SH^0_M(C_i;\Lambda)
\end{equation}hits the unit for all $1\leq i\leq N$.

The smoothness of $Y$ and the local statements highlighted in Equation \eqref{eq-local-mirror} are then expected consequences of the local generation package involving Cardy relations and more \cite{abouzaidgeneration, ganatra, ganatra2}; along with souped-up versions of the classical HKR theorem \cite{hkr, dtt, calaque, willwacher, calderaru, dolgushev}.



\bibliographystyle{plain}

\bibliography{Nodalspherebib}

\end{document}